\theoremstyle{thmstyleone}
\newtheorem{theorem}{Theorem}
\newtheorem{proposition}[theorem]{Proposition}
\theoremstyle{thmstyletwo}
\newtheorem{remark}{Remark}
\theoremstyle{thmstylethree}
\newtheorem{lemma}[theorem]{Lemma}
\newtheorem{corollary}{Corollary}
\numberwithin{equation}{section}
\def \J{\widehat{\;\textup{\textbf{J}}}_{\vec{n}} }
\def \indikator{\mathbbm{1}}
\def \indikatorfct{\chi}
\def \uIII{\widehat{\,\textup{\textbf{u}}_i}}
\def \zeros{\mathbb{O}}
\def \x{\textup{x}}
\def \P{\mathcal{P}}
\def \M{\mathcal{M}}
\def \D{\textup{D}}
\def \v{\widehat{\textup{\textbf{v}}}}
\def \T{\textup{T}}
\newcommand{\diag}{\textup{diag}}
\def \d{\,\textup{d}}
\DeclareMathOperator*{\argmin}{argmin}
\def \velocity{\boldsymbol{\nu}}
\def \velocityI{\boldsymbol{\nu}_1}
\def \velocityII{\boldsymbol{\nu}_2}
\def \velocityIII{\boldsymbol{\nu}_i}
\def \alphahat{\hat{\alpha}}
\def \veco{\vec{0}}
\def \DPk{d_k}
\def \VPk{v_k}
\def \VPl{v_\ell}
\def \VP{\mathcal{V}}
\def \VPhaar{\mathcal{H}}
\def \vhatStar{\boldsymbol{\widehat{v_*}}}
\def \DvhatStar{\boldsymbol{\widehat{\Delta v_*}}}
\def \DP{\mathcal{D}}
\def \entropy{\eta}
\def \AdmissibleSet{\mathbb{H}}
\def \squareRoot{\widehat{(\sqrt[2]{\u})}}
\def \nRoot{\widehat{(\sqrt[n]{\u})}}
\def \UNIT{\widehat{\;e_1}}
\def \RoeDetI{\alpha}
\def \RoeDetII{\beta}
\def \RoeII{\hat{\beta}}
\def \u{\textup{\textbf{u}}}
\def \uhat{\widehat{\textup{\textbf{u}}}}
\def \what{\widehat{\textup{\textbf{w}}}}
\def \yhat{\widehat{\textup{\textbf{y}}}}
\def \qhat{\widehat{\textup{\textbf{q}}}}
\def \uhati{\widehat{\,\textup{\textbf{u}}_i\,}}
\def \uhatj{\widehat{\,\textup{\textbf{u}}_j\,}}
\def \uhatI{\widehat{\,\textup{\textbf{u}}_1\,}}
\def \uhatII{\widehat{\,\textup{\textbf{u}}_2\,}}
\def \uhatd{\widehat{\,\textup{\textbf{u}}_d\,}}
\def \uI{\widehat{\,\textup{\textbf{u}}_i}}
\def \chat{\widehat{\textup{\textbf{c}}}}
\def \shat{\widehat{\textup{\textbf{s}}}}
\def \sign{\textup{sign}}
\begin{document}

\title[Haar-type stochastic Galerkin formulations]{Haar-type stochastic Galerkin formulations for hyperbolic systems with Lipschitz continuous flux function}

\author{Stephan Gerster}
\address{Universit\`{a} degli Studi dell'Insubria,  Como, Italy}
\email{stephan.gerster@gmail.com}

\author{Aleksey Sikstel}
\address{Technische Universität Darmstadt, Germany}
\email{sikstel@mathematik.tu-darmstadt.de}

\author{Giuseppe Visconti}
\address{Sapienza Universit\`{a} di Roma, Italy}
\email{giuseppe.visconti@uniroma1.it}

\date{}

\dedicatory{}

\begin{abstract}
	This work is devoted to the Galerkin projection of highly nonlinear random quantities. The dependency on a random input is described by Haar-type wavelet systems. The classical Haar sequence has been used by Pettersson, Iaccarino, Nordstr\"om (2014) for a hyperbolic stochastic Galerkin formulation of the one-dimensional Euler equations. This work generalizes their approach to several  multi-dimensional systems with Lipschitz continuous and  non-polynomial flux functions. Theoretical results are illustrated numerically by a genuinely multi-dimensional CWENO reconstruction.
\end{abstract}

\keywords{Hyperbolic partial differential equations, 
	uncertainty quantification, 
	stochastic Galerkin method, 
	wavelet systems, 
	Haar-type expansions,
	high order discretization}

\subjclass[2020]{Primary: 35L65, 35R60,   65T60}

\maketitle


\section{Introduction}
There are close relations between stochastic processes and orthogonal functions~\cite{S20}. 
Following early works by Wiener~\cite{S1}, where 
Hermite polynomials 
and homogeneous chaos has been used 
in the integration theory for Brownian motion, 
the representation of random quantities by orthogonal functions has 
received increasing attention~\cite{S2,S3,S4,S16}. 

In the context of intrusive stochastic Galerkin methods, 
the functional dependence on the stochastic input
is described a priori by a polynomial chaos~(gPC) expansion and a Galerkin projection is used to obtain
deterministic evolution equations for the coefficients in the series, called gPC modes. Then, all involved mathematical operations, e.g.~products and norms, must be adopted and  applied to the variables in the governing equations. 
This causes serious theoretical and numerical challenges~\cite{S15}. For instance, the accuracy of the truncated gPC representations affects the representation of  positive quantities, e.g.~the density of a gas, and nonlinearities may require a high gPC order, which  may be too computationally expensive.

To this end,  wavelet-based  expansions have been introduced~\cite{LEMAITRE2004,S5,S9}.  
These are motivated by a robust expansion for solutions that depend on the stochastic input in a non-smooth way and are used for stochastic multiresolution as well as adaptivity~\cite{S9,SI1,Kroeker2015,Tryoen2012,Igor}. 

The stochastic Galerkin method has been  applied  to diffusion~\cite{Webster2016,P1,P2,Igor}, kinetic~\cite{K1,K2,Yuhua2017,Yuhua2018,Hui2020} and Hamilton-Jacobi equations~\cite{HamiltonJacobiJin,LevelSet}. 	
In general, results for hyperbolic systems are not available~\cite{H0,S4}, since desired properties like hyperbolicity and the existence of entropies are not transferred to the intrusive  formulation. A problem is posed by the fact that the deterministic Jacobian of the projected system differs from the random Jacobian of the original system and, therefore, not even real eigenvalues, which are necessary for hyperbolicity,  are  guaranteed in general.

There are many examples that show a loss of hyperbolicity, when the stochastic Galerkin approach is applied directly to hyperbolic systems. 
To this end, auxiliary variables have been introduced to establish wellposedness results. For instance, entropy-entropy flux pairs can be obtained by an expansion in entropy variables, i.e.~the gradient of the deterministic entropy~\cite{H0,StephansDiss,kusch2019maximum,kusch2020filtered,kusch2020intrusive}.  
Roe variables, which include the square root of the density, preserve hyperbolicity for Euler equations~\cite{Roe1,S5,FettesPaper,GersterHertyCicip2020}. 
The drawback of introducing auxiliary variables is an additional computational overhead that arises from an optimization problem, which is required to calculate the auxiliary variables. 
Furthermore, the transforms may  involve integrals and derivatives 
that have to be calculated numerically~\cite[Sec.~3.3]{StephansDiss}. But in this case,
the formulation is not \emph{truly intrusive},  i.e.~not all integrals are computed \emph{exactly}. 
Another problem is the connection between the hyperbolicity of the Galerkin and the original system. Namely, the required optimization problems are well-posed only in a possibly small domain~\cite[Sec.~4]{FettesPaper} when the solvability of the transform is guaranteed only locally by the implicit function theorem.

Recently, a hyperbolic stochastic Galerkin formulation for  the shallow water equations has been presented that  neither requires  auxiliary variables nor any transform, since the  
Jacobian is shown to be similar to a symmetric 
matrix~\cite{Epshteyn2021,Epshteyn2022}. 
Then, positivity of the water height at a finite 
number of stochastic quadrature points is sufficient to preserve hyperbolicity of the stochastic Galerkin formulation.

These results exploit \emph{quadratic} relationships that are expressed efficiently by the Galerkin product. 
Extensions of the classical polynomial chaos expansions to general nonlinearities, e.g.~with Legendre or Hermite polynomials, however, are not straightforward. Inspired by the application of the Haar sequence to solve Euler equations~\cite{S5}, we express nonlinearities, which occur  for instance in isentropic Euler and level set equations, by  wavelet families that are generated by Haar-type matrices~\cite{Resnikoff1998}. Those are
widely used in signal processing~\cite{Pratt2007,Resnikoff1998}, e.g.~for the Hadamard,  Walsh, Chebyshev matrices and for the discrete cosine transform. 

The aim of the paper is to show that many theoretical and numerical challenges that occur for polynomial expansions vanish when   these Haar-type expansions are employed.  
More precisely, our contribution is summarized as follows:
\begin{itemize}
	\item 
Firstly, stochastic Galerkin systems presented in this manuscript have not been proven hyperbolic for general basis functions so far (with the exception of level set equations)~\cite{LevelSet}.
	\item 
Secondly, the formulations require no optimization problems in contrast to previous works~\cite{S5,FettesPaper,LevelSet}, where  Roe-type approaches are used.
	\item 
Thirdly, the main Theorem~\ref{LemmaHaar} presents novel calculation rules that are important for other works, e.g.~\cite{Janina},  where an entropy-stable and well-balanced discontinuous Galerkin method is derived.
\end{itemize}

In particular, wellposedness results are achieved for hyperbolic conservation laws with Lipschitz continuous flux function.  
Since all expressions are stated in closed form without any optimization problem, 
the~connection between the hyperbolicity of the Galerkin and the original system is established. Namely, deterministic wellposedness properties of the original system at stochastic quadrature points transfer wellposedness results to the stochastic Galerkin formulation. 
Furthermore,  all eigenvalue decompositions are stated in closed form which allows for an efficient implementation also in higher dimensions. 
The presented results are~\emph{not} restricted to a dimensional splitting. Namely, the characteristic speeds are real with respect to \emph{all} spatial dimensions. This enables genuinely multi-dimensional high order methods based on the CWENO reconstruction in space~\cite{SempliceCocoRusso,CastroSemplice2019} that also allows for discretizations of balance laws. 

This paper is structured as follows. Stochastic Galerkin matrices are introduced in Section~\ref{SectionSGmatrices} for general polynomial chaos expansions. 
Section~\ref{SectionHaar} describes the representation of nonlinear quantities by Haar-type expansions. In particular, Section~\ref{SectionChallenges} addresses analytical and numerical challenges that   occur when these special expansions are not used. 
Those are applied  to stochastic Galerkin formulations for   hyperbolic conservation laws in Section~\ref{SectionConservationLaws}. 
Finally, Section~\ref{Numerics} presents numerical experiments that confirm the theoretical findings.

\section{General stochastic Galerkin formulations}\label{SectionSGmatrices}

\noindent
We introduce a random variable $\xi \, :\, \Omega\rightarrow \mathbb{R}$,  which is defined on a probability space~$\big(\Omega,\mathcal{F}(\Omega),\mathbb{P}\big)$, and associated orthonormal basis functions $\big\{ \phi_k(\xi) \big\}_{k\in\mathbb{N}_0}$ satisfying
$$
\mathbb{E}\Big[
\phi_i(\xi) \phi_j(\xi)
\Big]
=
\int \phi_i(\xi) \phi_j(\xi) \d \mathbb{P}
\eqqcolon
\langle \phi_{i}, \phi_{j}  \rangle_{\mathbb{P}}
=
\delta_{i,j},
$$
where~$\delta_{i,j}$ denotes the Kronecker delta. 
Then, for each random, scalar, square-integrable state $\u(\xi)\in\mathbb{L}^2(\Omega,\mathbb{P})$ there exist \textbf{gPC modes} $\uhat\in\mathbb{R}^{K+1}$ with~${K\in\mathbb{N}_0}$ such that the \emph{truncated} series
\begin{equation}\label{gPCseries}
	\Pi_K\big[\uhat\big](\xi)
	\coloneqq
	\sum\limits_{k=0}^K
	\uhat_k \phi_k(\xi)
	\quad\text{satisfies}\quad
	\Big\lVert 
	\Pi_K\big[\uhat\big](\xi) 
	-
	\u(\xi)
	\Big\rVert_{\mathbb{P}}
	\overset{K\rightarrow \infty}{\longrightarrow}0,
\end{equation}
provided that the probability measure satisfies mild conditions~\cite{S2,funaro2008,Ullmann2010}.  
\textbf{Stochastic Galerkin matrices}~\cite{S15,S4,S18,Ullmann2010} which are defined by
$$
\P(\widehat{\u})\coloneqq\sum\limits_{k=0}^K \widehat{\u}_{k}\mathcal{M}_{k}
\quad\text{and}\quad
\mathcal{M}_{k}\coloneqq
\Big(
\langle \phi_{k},\phi_{i}\phi_{j} \rangle_{\mathbb{P}}\Big)_{i,j=0,\ldots,K}
$$
arise in a Galerkin projection of a random product~$\u(\xi)\textbf{q}(\xi)\in\mathbb{L}^2(\Omega,\mathbb{P})$, called \textbf{Galerkin product}, that is denoted by
$$\widehat{\u}
\ast
\widehat{\textbf{q}}
\coloneqq
\P(\widehat{\u})
\widehat{\textbf{q}}
\ \
\text{and satisfies}
\ \
\Big\lVert 
\Pi_K\big[\widehat{\u}
\ast
\widehat{\textbf{q}}\big](\xi) 
-
\u(\xi)\textbf{q}(\xi)
\Big\rVert_{\mathbb{P}}
\rightarrow 0
\ \ \text{for}\ \
K\rightarrow \infty. 
$$

\noindent
Due to the orthonormality of the basis functions, we obtain the identity matrix by~$\P(\UNIT)=\M_0=\indikator\in
\mathbb{R}^{(K+1)\times (K+1)}
$, where~$\UNIT\in\mathbb{R}^{K+1}$ denotes the unit vector. 
Furthermore, the choice 
$$
\uhat^{\ast m}
\coloneqq
\P^{m}(\uhat) \UNIT
=
\P^{m-1}(\uhat) \P(\UNIT) \uhat
=
\uhat
\ast \ldots \ast \big( \uhat \ast (\uhat\ast\uhat) \big)
$$ 	
with~$m\in\mathbb{N}$ and~$\uhat^{\ast 0} \coloneqq \UNIT$  
is a consistent approximation of the $m$-th power~$\u^m$ of a random function~$\u(\xi)$, i.e.
$$
\Big\lVert
\u^m(\xi)
-
\Pi_K\left[		\uhat^{\ast m} \right](\xi)
\Big\rVert_{\mathbb{P}}
\rightarrow
0
\quad\text{for}\quad
K\rightarrow\infty.
$$

\noindent
The matrix~$\P(\uhat)$ defines a linear operator. Hence, the following  sets are convex. 
\begin{equation}\label{Definitionsbereich}
	\begin{aligned}
		\mathbb{H}^+
		&\coloneqq 
		\Big\{ \, \widehat{\u} \in \mathbb{R}^{K+1} \ \Big\vert \ \P(\widehat{\u}) \text{ is strictly positive definite} \Big\},\\
		\mathbb{H}^+_0
		&\coloneqq 
		\Big\{ \, \widehat{\u} \in \mathbb{R}^{K+1} \ \Big\vert \ \P(\widehat{\u}) \text{ is positive semidefinite} \Big\}.
	\end{aligned}
\end{equation}
These positive definiteness assumptions reflect the representation of positive quantities in the truncated expansion~\eqref{gPCseries}. More precisely, the symmetric matrix~$\P(\widehat{\u})$  is strictly positive definite if all   realizations~$\Pi_K\big[\uhat\big]\big(\xi(\omega)\big)>0$ are $\mathbb{P}$-almost surely ($\mathbb{P}$-a.s.) positive~\cite{Sonday2011,H3}. 
It  admits an orthogonal eigenvalue decomposition that is denoted by~$
\P(\widehat{\u})
=
\VP(\widehat{\u})
\DP(\widehat{\u})
\VP^\T(\widehat{\u})
$ 
and satisfies the relation~$\VP^\T(\widehat{\u})=\VP^{-1}(\widehat{\u})$. Furthermore, the Galerkin product is symmetric, i.e.~$\widehat{\u}
\ast
\qhat
=
\qhat
\ast
\widehat{\u}$. 
For general basis functions we have the following lemma. 

\begin{lemma}\label{Lemma0}	
	Consider the orthogonal eigenvalue decomposition of the stochastic Galerkin matrix ${
		\P(\widehat{\u})
		=
		\VP(\widehat{\u})
		\DP(\widehat{\u})
		\VP^\T(\widehat{\u})
	}$ 
	with eigenvectors 
	$\VP = \big(v_0\vert\cdots\vert v_K\big)$ 
	and eigenvalues 
	$\DP = \diag\big\{d_0,\ldots,d_K\big\}$ 
	that satisfy the relation~$	
	\P(\widehat{\u})
	v_k(\widehat{\u})
	=
	d_k(\widehat{\u})
	v_k(\widehat{\u})
	$. 	
	Then, we have the equality\footnote{The Jacobian  is denoted by~$\D_{\bar{\alpha}}$ and the gradient by~$\nabla_{\bar{\alpha}}$.} 
	$$
	\D_{\bar{\alpha}}
	\DP(\bar{\alpha})\big\vert_{\bar{\alpha}=\widehat{\u}} \VP^\T(\widehat{\u})\qhat
	=
	\VP^\T(\widehat{\u})
	\P(\qhat)
	\quad\text{for all}\quad
	\widehat{\u}, \qhat \in\mathbb{R}^{K+1}.
	$$	
	
\end{lemma}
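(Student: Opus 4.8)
The plan is to read the claim as the first-order perturbation formula for the spectrum of $\P(\bar\alpha)$ and to obtain it by differentiating the eigenrelation in the direction $\qhat$. The only structural input required is that $\bar\alpha\mapsto\P(\bar\alpha)=\sum_k\bar\alpha_k\M_k$ is \emph{linear}, so its Fr\'echet derivative is constant and evaluates on a direction to $\D_{\bar\alpha}\P(\bar\alpha)[\qhat]=\P(\qhat)$. This is precisely what makes $\P(\qhat)$ appear on the right-hand side, while the differentiation of the spectral factors produces the $\VP^\T(\widehat{\u})$ and $\DP$ factors on the left.

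First I would fix $\bar\alpha=\widehat{\u}$ and assume, as holds generically, that the eigenvalues $d_0(\widehat{\u}),\ldots,d_K(\widehat{\u})$ are simple, so that $\bar\alpha\mapsto\DP(\bar\alpha)$ and $\bar\alpha\mapsto\VP(\bar\alpha)$ can be chosen differentiably on a neighbourhood; writing $\dot{(\cdot)}$ for the directional derivative $\D_{\bar\alpha}(\cdot)[\qhat]$ at $\widehat{\u}$, I would differentiate the two defining identities $\P(\bar\alpha)\VP(\bar\alpha)=\VP(\bar\alpha)\DP(\bar\alpha)$ and $\VP^\T(\bar\alpha)\VP(\bar\alpha)=I$. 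The second shows that $S:=\VP^\T\dot\VP$ is skew-symmetric, hence $S_{kk}=0$, which is the crucial cancellation that eliminates the unknown eigenvector variation. Left-multiplying the differentiated first identity by $\VP^\T$ and using $\VP^\T\P=\DP\VP^\T$ (the transpose of the eigenrelation, valid since $\P^\T=\P$ and $\DP^\T=\DP$) turns it into $\VP^\T\P(\qhat)\VP+[\DP,S]=\dot\DP$. Because $([\DP,S])_{kk}=d_k S_{kk}-S_{kk}d_k=0$ and $\dot\DP$ is diagonal, taking diagonal parts gives $\dot d_k=v_k^\T\P(\qhat)v_k$ for every $k$, which is the entrywise content of the asserted identity once the modes are collected into $\VP^\T(\widehat{\u})\P(\qhat)$ and the directional factor into $\VP^\T(\widehat{\u})\qhat$. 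Equivalently and more quickly, one may start from the Rayleigh-quotient expression $d_k(\bar\alpha)=v_k(\bar\alpha)^\T\P(\bar\alpha)v_k(\bar\alpha)$ and differentiate directly, the eigenvector-derivative terms cancelling by $\P v_k=d_k v_k$ together with $v_k^\T\dot v_k=0$.

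The main obstacle is not the algebra but the legitimacy of differentiating the spectral decomposition: where eigenvalues coalesce, $\VP$ need not be differentiable and $\DP$ is in general only Lipschitz. I would handle this by first establishing the identity on the open, dense set where the spectrum is simple (invoking analytic perturbation theory to produce a smooth orthonormal eigenframe) and then extending to all of $\mathbb{R}^{K+1}$ by continuity, since both sides of the claimed equality are continuous in $\widehat{\u}$. A secondary point to verify is the bookkeeping in passing from the per-mode scalars $\dot d_k=v_k^\T\P(\qhat)v_k$ to the stated matrix form: here one uses $\VP^\T\VP=I$ to re-expand and the symmetry $\P(\qhat)^\T=\P(\qhat)$ to ensure that only the diagonal contributions survive, consistently with $\D_{\bar\alpha}\DP$ being diagonal-valued.
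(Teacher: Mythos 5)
Your argument is essentially the paper's own proof: you differentiate the eigenrelation $\P(\bar\alpha)\VP(\bar\alpha)=\VP(\bar\alpha)\DP(\bar\alpha)$ together with the orthonormality $\VP^\T\VP=I$, use the resulting skew-symmetry (equivalently $v_k^\T\partial_j v_k=0$) to cancel the eigenvector variations, arrive at the first-order perturbation formula $\partial_{\bar\alpha_j}d_k=v_k^\T\M_k v_k$ with $\partial_{\bar\alpha_j}\P=\M_j$, and assemble the columns into $\VP^\T(\widehat{\u})\P(\qhat)$ via linearity of $\bar\alpha\mapsto\P(\bar\alpha)$. The only difference is that you explicitly flag the differentiability issue at eigenvalue crossings and resolve it by a density-and-continuity argument, a point the paper passes over in silence.
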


\noindent
The proof follows basic algebraic arguments and is given in the appendix.


\section{Haar-type stochastic Galerkin formulations}\label{SectionHaar}

\noindent
It has been shown in~\cite[Appendix~B]{S5} that
the \textbf{Haar sequence}~\cite{Haar1910} yields a stochastic Galerkin matrix with constant eigenvectors, i.e.~${\VP(\hat{\alpha})= \VPhaar}$ for all~$\hat{\alpha}\in\mathbb{R}^{K+1}$. More precisely, it is defined on a  level~${J\in\mathbb{N}_0}$ and generates a  \textbf{wavelet system} with ${K+1=2^{J+1}}$ elements by 
\begin{align*}
	&\begin{aligned}
		\mathbb{W}\big[\VPhaar_J\big] &\coloneqq \big\{ \indikatorfct_{[0,1]}(\xi),  \psi_{\boldsymbol{\ell}}(\xi) \ \big\vert \ 
		\boldsymbol{\ell} \in \mathbb{L}_j
		\big\} 
		\qquad \text{with piecewise constant functions} \  \\
		\psi_{\boldsymbol{0}}(\xi) &\coloneqq
		\begin{cases}
			1  & \text{if \ } 0 \leq \xi < \nicefrac{1}{2},  \\
			-1 & \text{if \ } \nicefrac{1}{2} \leq \xi < 1,  \\
			0  & \text{else,}
		\end{cases} \quad
		\psi_{\boldsymbol{\ell}}(\xi)
		\coloneqq
		2^{\nicefrac{j}{2}}
		\psi_{\boldsymbol{0}}
		\big(2^j \xi - k \big) 
		\text{ \ for \ }
		\boldsymbol{\ell} \in\mathbb{L}_J \backslash \{ \boldsymbol{0} \}
	\end{aligned}\\
	& \text{and index set}\quad
	\mathbb{L}_J \coloneqq 
	\big\{ \boldsymbol{0} \big\} \cup
	\big\{ (k,\ell) \ \big\vert \ k=0,\ldots,2^j-1, \ j=1,\ldots,J  
	\big\},
\end{align*}
where~$\indikatorfct_{[0,1]}$ denotes the indicator function on~$[0,1]$. 
Using a lexicographical order for the bases functions, the resulting stochastic Galerkin matrix reads as
\begin{equation}\label{classicalSGmatrix}
	\P_J(\widehat{\u})\coloneqq\sum\limits_{\boldsymbol{\ell}\in \mathbb{L}_J } \widehat{\u}_{\boldsymbol{\ell}}\mathcal{M}_{\boldsymbol{\ell}}
	\quad\text{and}\quad
	\mathcal{M}_{\boldsymbol{\ell}}\coloneqq
	\Big(
	\langle \psi_{\boldsymbol{\ell}},\psi_{\boldsymbol{i}}\psi_{\boldsymbol{j}} \rangle_{\mathbb{P}}\Big)_{\boldsymbol{i},\boldsymbol{j}\in\mathbb{L}_J}. 
\end{equation}
According to~\cite{S5,Pratt2007}, it admits the orthogonal eigenvalue decomposition~$
\P_J({\uhat}) = \VPhaar_J \mathcal{D}_J({\uhat}) \VPhaar_J^{\T}
$ 
with the \textbf{classical Haar matrix} 
\begin{equation}\label{HaarTrafo}
	\VPhaar_{J}
	=
	\frac{1}{\sqrt{2}}
	\begin{pmatrix}
		\VPhaar_{J-1} \otimes (1,1) \\
		\indikator \otimes (1,-1)
	\end{pmatrix}
	\quad\text{for}\quad
	\VPhaar_{0} =
		\frac{1}{\sqrt{2}}
	\begin{pmatrix}
		1 & 1 \\ 1 & -1
	\end{pmatrix},
\end{equation}
where~$\otimes$ denotes the Kronecker product and~$\indikator\coloneqq \diag\{1,\ldots,1\}$ the identity matrix. 
More generally, we consider \textbf{Haar-type matrices} 
$$
\VPhaar
=
\begin{pmatrix}
	1 & \veco^{\,\T} \\
	\veco & \mathcal{O}_{\VPhaar}
\end{pmatrix}
\VPhaar_c
\quad\text{for}\quad
\VPhaar_c
= 
\begin{pmatrix}
	1 & 1 & \cdots & 1 \\
	h^d_{1}  & h^r_1 & \ldots &  h^r_1 \\
	& \ddots & \ddots & \vdots \\
	& & h^d_{K}  &  h^r_K
\end{pmatrix},
\quad
\begin{aligned}
	h^d_i
	&\coloneqq
	s_i h^r_i , \\
	h^r_i
	&\coloneqq
	\sqrt{ \frac{K+1}{s_i^2+s_i} }, \\
	s_{i}
	&\coloneqq
	-(K+1-i) \\
\end{aligned}
$$
with $\veco\coloneqq (0,\ldots,0)^\T$. Here,~$\VPhaar_c\in\mathbb{R}^{(K+1)\times (K+1)}$ is the \textbf{canonical Haar matrix}~\cite[Th.~4.1, Cor.~4.4]{Resnikoff1998} and~$\mathcal{O}_{\VPhaar}\in\mathbb{R}^{K\times K}$ is an orthogonal matrix. 
All of these Haar-type matrices~$\VPhaar$ generate in turn a wavelet sytem
\begin{equation}\label{WaveletSystem}
	\begin{aligned}
		\mathbb{W}\big[\VPhaar\big] &\coloneqq \Big\{ \indikatorfct_{[0,1]}(\xi), \ 
		\phi_1(\xi),\ldots,
		\phi_K(\xi) 
		\Big\}
		\quad\text{for}\quad \\
		\phi_k(\xi) 
		&\coloneqq
		\sum\limits_{\ell=0}^K  \VPhaar_{k,\ell} \, \indikatorfct_{[0,1]}
		\big(
		(K+1)\xi - \ell
		\big)
	\end{aligned}
\end{equation}
that allows to approximate any square-integrable function~\cite[Th.~5.1,~Th.~5.2]{Resnikoff1998}. 
Many classical transforms used in signal processing are Haar matrices~\cite[Ch.~8]{Pratt2007}. These include~Hadamard,  Walsh,  Chebyshev matrices and the \textbf{discrete cosine transform (DCT)} matrix~\cite[Ex.~4.7]{Resnikoff1998} with entries
\begin{equation}\label{CosineTrafo}
	\left(\VPhaar_{\cos}\right)_{i,j}
	=
	\begin{cases}
		1 
		& \text{if \ \ } i=1, \\
		\sqrt{2}\cos\Big(
		\frac{(i+1) ( 2j-1 ) }{2(K+1)}
		\Big)
		& \text{if \ \ } i>1.
	\end{cases}
\end{equation}

\begin{remark}
	The drawback of piecewise constant expansions is the lack of spectral convergence. 
	Whether a given basis leads to an eigenvalue decomposition with constant eigenvectors can be verified numerically by checking if all precomputed matrices~$\M_k$  commute~\cite[Lem.~4.1]{GersterHertyCicip2020}. This allows also for an extension to piecewise linear functions.  More precisely, the domain~$\Xi\coloneqq[0,1]$ can be partitioned into~$N\in\mathbb{N}$ subdomains
	$$
	\Xi_k\coloneqq(\xi_k,\xi_{k+1}) 
	\quad\text{with}\quad
	\xi_k\coloneqq \nicefrac{k}{N}
	\quad\text{and}\quad
	k\in\mathbb{J}\coloneqq\{ 0,\ldots,N-1 \}.
	$$
	The $L^2(\Xi)$-scaled orthogonal piecewise functions are
	$$
	\begin{aligned}
		\mathbb{W}_{\Xi}
		&\coloneqq
		\bigcup_{k\in\mathbb{J}}
		\mathbb{W}_{\Xi_k}
		\quad\text{for}\\
		\mathbb{W}_{\Xi_k}
		&\coloneqq
		\Big\{ \psi_{k,0}(\xi) \coloneqq
		\indikatorfct_{[\xi_k,\xi_{k+1}]}(\xi)
		\sqrt{N} , \ \psi_{k,1}(\xi) \coloneqq 
		\indikatorfct_{[\xi_k,\xi_{k+1}]} (\xi)
		\sqrt{3N}  \xi  \Big\}.
	\end{aligned}
	$$
	Since the supports of~$\psi_{k,i}$ and $\psi_{\ell,i}$ are disjoint for $k\neq\ell$, the stochastic Galerkin matrix has the block diagonal structure
	\begin{align*}
		&\P_{\Xi}(\uhat)
		\coloneqq
		\sum\limits_{\boldsymbol{\ell}\in\mathbb{L}_{\Xi}}
		\uhat_{\boldsymbol{\ell}} \M_{\boldsymbol{\ell}}
		=
		\diag\bigg\{
		\sum\limits_{i=0,1}
		\uhat_{0,i}\,
		\M^{(i)},
		\ldots,
		\sum\limits_{i=0,1}
		\uhat_{N,i}\,
		\M^{(i)}
		\bigg\} \\
		&\text{for}\quad
		\mathcal{M}_{\boldsymbol{\ell}}\coloneqq
		\Big(
		\langle \psi_{\boldsymbol{\ell}},\psi_{\boldsymbol{i}}\psi_{\boldsymbol{j}} \rangle_{\mathbb{P}}\Big)_{\boldsymbol{i},\boldsymbol{j}\in\mathbb{L}_{\Xi}}
		\quad\text{and}\quad
		\M^{(i)} 
		\coloneqq
		\Big(
		\langle \psi_{k,m} \psi_{k,n} \psi_{k,i} \rangle_{\mathbb{P}}\Big)_{m,n\in\{0,1\}}
	\end{align*}
	with index set~$
	\mathbb{L}_{\Xi}
	\coloneqq
	\big\{
	(k,i) \ \big\vert \ k\in\mathbb{J}, \ i=0,1
	\big\}
	$. 
	Hence, the matrices~$\M_{\boldsymbol{\ell}} \in\mathbb{R}^{2N\times 2N}$ commute, since the matrix~$\M^{(1)} \in\mathbb{R}^{2\times 2}$ commutes with the identity matrix~$\M^{(0)}=\indikator \in\mathbb{R}^{2\times 2}$. \\
	
\end{remark}

\noindent
For general bases, e.g.~Hermite or Legendre polynomials, the stochastic Galerkin matrices do not commute~\cite{S4}. 
When Haar-type expansions are used, however, we have
\begin{equation}\label{Commutativity}
	\P(\uhat)\P(\widehat{\textbf{w}}) 
	=
	\VPhaar\big[ \DP(\uhat)\DP(\widehat{\textbf{w}}) \big]\VPhaar^\T
	=
	\VPhaar\big[\DP(\widehat{\textbf{w}}) \DP(\uhat) \big] \VPhaar^\T 
	=
	\P(\widehat{\textbf{w}}) \P(\uhat).
\end{equation}

\noindent 
The commutativity~\eqref{Commutativity}  yields  two important properties that 
\textbf{Haar-type expansions} fulfill, which are stated in the following lemma.

\begin{lemma}\label{Lemma2}	Given a Haar-type expansion the following statements hold.
	\begin{enumerate}[(i)]
		\item
		The equality~$
		\P\big(
		\P(\uhat)
		\what
		\big)
		=
		\P(\uhat)
		\P(
		\what
		)
		$ 
		is satisfied  
		for all 
		$\uhat, \what \in\mathbb{R}^{K+1}$.
		\item
		The mapping
		$		
		\mathcal{T}_n \;  : \;  \AdmissibleSet_0^+\rightarrow\AdmissibleSet^+_0,
		\ 
		\uhat \mapsto \uhat^{\ast n} =\P^{n}(\uhat) \UNIT
		$ is bijective for all~${n\in\mathbb{N}}$.  Its inverse~reads as~$
		\mathcal{T}_n^{-1}(\uhat)
		=
		\VPhaar\mathcal{D}^{ \nicefrac{1}{n}} \big(
		\uhat
		\big) \VPhaar^\T \UNIT$. 
	\end{enumerate}
\end{lemma}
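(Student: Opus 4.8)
The plan is to exploit the single structural feature of Haar-type expansions, namely that every Galerkin matrix is diagonalized by the \emph{same} orthonormal eigenbasis, $\P(\uhat)=\VPhaar\DP(\uhat)\VPhaar^\T$ with $\VPhaar$ constant and $\DP(\uhat)$ diagonal. First I would record three elementary facts. Since $\M_0=\P(\UNIT)=\indikator$ and the Galerkin product is symmetric, one has $\P(\uhat)\UNIT=\P(\UNIT)\uhat=\uhat$. Because $\P$ is linear and $\VPhaar$ is fixed, $\uhat\mapsto\DP(\uhat)=\VPhaar^\T\P(\uhat)\VPhaar$ is a linear map into the diagonal matrices that is injective ($\DP(\uhat)=0\Rightarrow\P(\uhat)=0\Rightarrow\uhat=\P(\uhat)\UNIT=0$), hence bijective by equality of dimensions. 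Consequently the image $\mathcal{A}\coloneqq\{\P(\uhat)\mid\uhat\in\mathbb{R}^{K+1}\}$ equals the commutative algebra $\{\VPhaar D\VPhaar^\T\mid D\text{ diagonal}\}$, so $\mathcal{A}$ is closed under matrix multiplication. The decisive consequence is the identity $A=\P(A\UNIT)$ valid for every $A\in\mathcal{A}$: writing $A=\P(\widehat{\mathbf z})$ gives $A\UNIT=\P(\widehat{\mathbf z})\UNIT=\widehat{\mathbf z}$, whence $A=\P(A\UNIT)$.

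Statement~(i) then follows at once. The product $\P(\uhat)\P(\what)=\VPhaar\DP(\uhat)\DP(\what)\VPhaar^\T$ lies in $\mathcal{A}$, so applying the identity above with $A=\P(\uhat)\P(\what)$ yields $\P(\uhat)\P(\what)=\P\big(\P(\uhat)\P(\what)\UNIT\big)=\P\big(\P(\uhat)\what\big)$, where I used $\P(\what)\UNIT=\what$. This is precisely the associativity of the Galerkin product anticipated in the discussion following Lemma~\ref{Lemma1}.

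For statement~(ii) I would pass to eigenvalue coordinates. Applying $A=\P(A\UNIT)$ to $A=\P^m(\uhat)\in\mathcal{A}$ gives $\P\big(\mathcal{T}_m(\uhat)\big)=\P\big(\P^m(\uhat)\UNIT\big)=\P^m(\uhat)=\VPhaar\DP(\uhat)^m\VPhaar^\T$, so that $\DP\big(\mathcal{T}_m(\uhat)\big)=\DP(\uhat)^m$; in other words, in the coordinates furnished by the bijection $\uhat\mapsto\DP(\uhat)$ the map $\mathcal{T}_m$ acts entrywise by $d\mapsto d^m$. Since $\P(\uhat)$ is symmetric with orthonormal eigenbasis $\VPhaar$, it is positive semidefinite exactly when all diagonal entries of $\DP(\uhat)$ are nonnegative; hence this bijection identifies $\AdmissibleSet_0^+$ with the nonnegative orthant $[0,\infty)^{K+1}$, on which the componentwise power $t\mapsto t^m$ is a bijection for $m\ge 1$ with inverse $t\mapsto t^{1/m}$. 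Composing, $\mathcal{T}_m$ is a bijection of $\AdmissibleSet_0^+$; in particular it is well defined into $\AdmissibleSet_0^+$ because $d\ge 0$ forces $d^m\ge 0$.

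The genuinely load-bearing step is the algebra fact, i.e.\ that $\mathcal{A}$ is multiplicatively closed and that evaluation at $\UNIT$ inverts $\P$ on $\mathcal{A}$; this is what converts the qualitative ``constant eigenvectors'' hypothesis into the usable identity $A=\P(A\UNIT)$, from which both claims drop out. After that, the only point in~(ii) demanding care is the correct dictionary between positive semidefiniteness and the nonnegative orthant, together with the verification that $\mathcal{T}_m$ returns values in $\AdmissibleSet_0^+$; surjectivity there relies on the existence of nonnegative $m$-th roots, which the semidefinite cone supplies.
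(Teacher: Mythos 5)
Your proof is correct and follows essentially the same route as the paper's: both arguments rest on the commutativity $\P(\uhat)\P(\what)=\P(\what)\P(\uhat)$ supplied by the constant eigenvectors together with the identity $\P(\uhat)\UNIT=\uhat$, and both reduce statement (ii) to the relation $\DP\big(\mathcal{T}_m(\uhat)\big)=\DP(\uhat)^m$ in eigenvalue coordinates. Your packaging via the commutative algebra $\mathcal{A}$ and the identity $A=\P(A\UNIT)$, and your explicit justification of bijectivity through the linear bijection $\uhat\mapsto\DP(\uhat)$ onto the diagonal matrices, is if anything slightly more complete than the paper's terse ``hence the mapping is bijective,'' which instead exhibits the inverse formula $\mathcal{T}_n^{-1}(\hat{\rho})=\VPhaar\DP^{\frac{1}{n}-1}(\hat{\rho})\VPhaar^{\T}\hat{\rho}$ directly.
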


\begin{proof}
	Equation~\eqref{Commutativity} and the symmetry of the Galerkin product  imply 
	the first statement, i.e.
	$$
	\P\Big( \P(\uhat) \what \Big)
	\yhat 
	= 
	\P\big(\yhat) \Big( \P(\uhat \big) \what \Big)
	=
	\P\big(\uhat\big) \P\big(\yhat\big) \what 
	=
	\P\big(\uhat\big) \P\big(\what\big) \yhat 
	\quad \text{for all}\quad
	\yhat\in\mathbb{R}^{K+1}. 	
	$$
	
	\noindent
	Statement~\textit{(i)}  implies
	\begin{equation}\label{Lemma2Eq1}
		\P\big( \uhat^{\ast n} \big)= \P\Big( \P^{n}(\uhat)\UNIT \Big)
		=
		\P^{n}(\uhat)
		\quad\Leftrightarrow\quad
		\mathcal{D}\big(\uhat^{\ast n}\big)
		=
		\mathcal{D}^n(\uhat).
	\end{equation}
	Hence, the mapping is bijective and the inverse~$\mathcal{T}_n^{-1}(\uhat)$ is obtained by
	\begin{align}
		&\uhat^{\ast n}
		=
		\P^{n}(\uhat)\UNIT
		=
		\P^{n-1}(\uhat)\uhat
		=
		\VPhaar\mathcal{D}^{n-1}(\uhat) \VPhaar^\T \uhat
		=
		\VPhaar\mathcal{D}^{ 1-\frac{1}{n} } \big(\uhat^{\ast n}\big) \VPhaar^\T \uhat \nonumber \\
		&\Leftrightarrow\quad
		\uhat
		=
		\VPhaar\mathcal{D}^{ \frac{1}{n}-1 } \big(
		\uhat^{\ast n}
		\big) \VPhaar^\T \uhat^{\ast n}
		= \mathcal{T}^{-1}_n\big(\uhat^{\ast n}\big).\label{Lemma2Eq2}
	\end{align}
\end{proof}

\noindent
The following theorem states gPC modes for several non-polynomial random quantities.

\begin{theorem}\label{LemmaHaar}
	Let a Haar-type gPC expansion be given. Then, the following statements are satisfied.
	\begin{enumerate}[(i)]
		\item
		The gPC modes of the function~$p(\u)\coloneqq\u^\gamma$ for the exponent $\gamma\in\mathbb{Q}^+_0$ and with the positive expansion~$\Pi_K\big[	\uhat\big]\big(\xi(\omega)\big) \in \mathbb{R}^+_0$ read as
		$$
		\widehat{p}(\uhat)
		=
		\VPhaar \DP(\uhat)^\gamma \VPhaar^\T \UNIT
		\quad\text{with Jacobian}\quad
		\D_{\uhat} \widehat{p}(\uhat)
		=
		\gamma
		\VPhaar \DP(\uhat)^{\gamma-1} \VPhaar^\T.
		$$
		\item 
		The gPC modes for the signum function of the real-valued expansion~$\Pi_K\big[	\uhat\big]\big(\xi(\omega)\big) \in \mathbb{R}$ are
		$$
		\shat(\uhat)
		\coloneqq 
		\VPhaar \sign\big\{ \DP(\uhat) \big\} \VPhaar^\T\UNIT.
		$$
		
		\item
		The gPC modes of the absolute value of the real-valued expansion~$\Pi_K\big[	\uhat\big]\big(\xi(\omega)\big) \in \mathbb{R}$ read as
		$$
		\widehat{\,\vert\u\vert\,}
		=
		\shat(\uhat) \ast \uhat
		\quad
		\text{with Jacobian}
		\quad
		\D_{\uhat} \widehat{\,\vert\u\vert\,}
		=
		\VPhaar \sign\big\{ \DP(\uhat) \big\} \VPhaar^\T.
		$$
		\item	 
		The gPC modes of the  $p$-norm for a vector-valued expansion~$\Pi_K\big[	\uhat\big]\big(\xi(\omega)\big) \in \mathbb{R}^d$ with gPC coefficients~$\uhat=(\uhatI,\ldots,\uhatd)^\T$ are
		\begin{align*}
			&\widehat{\,\lVert \u \rVert_1}
			=
			\sum\limits_{i=1}^d \VPhaar \big\vert \DP(\uI) \big\vert \VPhaar^\T \UNIT
			\quad\text{and}\quad
			\widehat{\,\lVert \u \rVert_p}
			=
			\VPhaar \Big\vert \DP\big(\chat(\uhat)\big) \Big\vert^{\frac{1}{p}} \VPhaar^\T \UNIT \\
			&\text{for}\quad
			\chat(\uhat)
			\coloneqq
			\sum\limits_{i=1}^d
			\widehat{\,\vert\u_i\vert\,}^{\ast p}
			\quad\text{with}\quad
			\widehat{\,\vert\u_i\vert\,}^{\ast p}
			=
			\VPhaar \big\vert \DP(\uI) \big\vert^p \VPhaar^\T \UNIT.
		\end{align*}
		The Jacobian reads as
		$$
		\D_{\uI} \widehat{\,\lVert \u \rVert_p}
		=
		\VPhaar 
		\Big[ 
		\DP^{\frac{1}{p}-1} \big(\chat(\uhat)\big) 
		\big\vert\DP (\uhati)\big\vert^{p-1} 
		\DP\big( \shat(\uhati) \big)
		\Big]
		\VPhaar^\T.
		$$
	\end{enumerate}
	
\end{theorem}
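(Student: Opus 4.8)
The whole argument rests on the single structural feature of a Haar-type expansion, namely that every stochastic Galerkin matrix diagonalises in the \emph{same} orthogonal basis, $\P(\uhat)=\VPhaar\DP(\uhat)\VPhaar^\T$ with $\VPhaar$ independent of $\uhat$. The plan is to reduce each nonlinear operation to a scalar calculus on the diagonal $\DP(\uhat)$. First I would record the two identities that power every part. Since the constant basis function gives $\P(\uhat)\UNIT=\uhat$, multiplying $\uhat=\VPhaar\DP(\uhat)\VPhaar^\T\UNIT$ by $\VPhaar^\T$ and using orthogonality yields $\DP_k(\uhat)=(\VPhaar^\T\uhat)_k/(\VPhaar^\T\UNIT)_k$; in particular each eigenvalue is a \emph{linear} functional of $\uhat$ with constant derivative $\partial_{\uhat_j}\DP_k(\uhat)=\VPhaar_{jk}/(\VPhaar^\T\UNIT)_k$. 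The same computation shows that for any diagonal $M$ the vector $\what\coloneqq\VPhaar M\VPhaar^\T\UNIT$ satisfies $\DP(\what)=M$, hence $\P(\what)=\VPhaar M\VPhaar^\T$, which is the inverse of the map $\what\mapsto\DP(\what)$ already used in Lemma~\ref{Lemma2}. Finally, because the realisations $\Pi_K[\uhat](\xi)$ are piecewise constant on the cells of the wavelet system~\eqref{WaveletSystem} and the eigenvalues $\DP_k(\uhat)$ are exactly their cell values, applying any scalar map $f$ entrywise to $\DP(\uhat)$ and transforming back reproduces the $L^2(\mathbb{P})$-projection of $f\circ\u$; this is the consistency engine for all four statements.

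Statement (i) I would prove for integer $\gamma=m$ from Lemma~\ref{Lemma1}(i) together with $\P^m(\uhat)=\VPhaar\DP^m(\uhat)\VPhaar^\T$, and for real $\gamma\ge1$ from the consistency engine with $f(x)=x^\gamma$ on the nonnegative cell values. Its Jacobian follows by differentiating $\VPhaar\DP^\gamma(\uhat)\VPhaar^\T\UNIT$ componentwise: the $(i,j)$ entry equals $\sum_k\VPhaar_{ik}\,\gamma\DP_k^{\gamma-1}(\uhat)\,\partial_{\uhat_j}\DP_k\,(\VPhaar^\T\UNIT)_k$, in which the constant $(\VPhaar^\T\UNIT)_k$ cancels against the denominator of $\partial_{\uhat_j}\DP_k$, leaving the clean matrix $\gamma\VPhaar\DP^{\gamma-1}(\uhat)\VPhaar^\T$. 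Statement (ii) is the consistency engine with $f=\sign$. Statement (iii) I would obtain by writing $|\u|=\sign(\u)\,\u$, so that $\widehat{|\u|}=\shat(\uhat)\ast\uhat=\P(\shat(\uhat))\uhat$; the inverse-map identity gives $\P(\shat(\uhat))=\VPhaar\sign\{\DP(\uhat)\}\VPhaar^\T$, and since $\sign\{\DP(\uhat)\}$ is locally constant in $\uhat$ the Jacobian is exactly this matrix.

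Statement (iv) is the one to assemble from the previous blocks. Combining (iii) and (i) gives the modes $\VPhaar|\DP(\uI)|^p\VPhaar^\T\UNIT$ of $|\u_i|^p$; summation over $i$ yields a nonnegative diagonal $\DP(\chat(\uhat))=\sum_i|\DP(\uI)|^p$, and a final $1/p$-th power produces $\widehat{\,\lVert\u\rVert_p}=\VPhaar\DP^{1/p}(\chat(\uhat))\VPhaar^\T\UNIT$, with the $p=1$ case read off directly. For the Jacobian with respect to the $i$-th block I would chain the rule from (i) twice: differentiating the outer $1/p$-th power contributes $\tfrac1p\VPhaar\DP^{1/p-1}(\chat(\uhat))\VPhaar^\T$, while $\D_{\uI}\chat(\uhat)=p\,\VPhaar|\DP(\uhati)|^{p-1}\DP(\shat(\uhati))\VPhaar^\T$ comes from the power rule applied to $|\cdot|^p$ together with $\DP(\shat(\uhati))=\sign\{\DP(\uhati)\}$. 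Since both factors are of the form $\VPhaar(\,\cdot\,)\VPhaar^\T$ with $\VPhaar^\T\VPhaar=\indikator$, their product multiplies the diagonals, the constants $\tfrac1p$ and $p$ cancel, and the stated single matrix $\VPhaar[\DP^{1/p-1}(\chat(\uhat))|\DP(\uhati)|^{p-1}\DP(\shat(\uhati))]\VPhaar^\T$ results.

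The main obstacle is not any single identity but the careful bookkeeping in (iv): one differentiates a composition of an absolute value, a $p$-th power, a summation and a $1/p$-th power, so I must keep the entrywise diagonal factors in the correct order and confirm that the cancellation of $p$ and $1/p$ is exact. I would also need to check that the non-differentiability of $|\cdot|$ and $\sign$ on the measure-zero set $\{\DP_k(\uhat)=0\}$ is harmless. The constant-eigenvector property is precisely what keeps everything tractable, since it lets every $\VPhaar$ and $\VPhaar^\T$ pass through the differentiation untouched and collapses the entire argument to a scalar chain rule on the diagonal.
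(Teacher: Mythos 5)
Your proof is correct and reaches every formula in the theorem by essentially the same algebraic mechanism as the paper: everything collapses to scalar calculus on the common diagonal $\DP(\uhat)$, and your Jacobian computations in (i), (iii) and (iv) coincide step for step with the paper's, which obtains $\D_{\uhat}\DP(\uhat)\VPhaar^\T\UNIT=\VPhaar^\T$ from Lemma~\ref{Lemma0} rather than from your explicit linear formula for $\DP_k(\uhat)$ -- but these are the same fact. Where you genuinely diverge is in how the consistency of $\VPhaar f\big(\DP(\uhat)\big)\VPhaar^\T\UNIT$ is justified. The paper routes everything through the transform maps $\mathcal{T}_m$, $\mathcal{T}_n^{-1}$ of Lemma~\ref{Lemma2}: statement (i) is proved as a composition of these maps for \emph{rational} $\gamma=\nicefrac{m}{n}$ only (even though the statement allows real $\gamma\geq 1$), and $\widehat{\,|\u|\,}$ is obtained as $\mathcal{T}_2^{-1}\big[\mathcal{T}_2[\uhat]\big]$ with the nonnegative square-root branch, from which the signum modes are then read off; you instead identify the eigenvalues $\DP_k(\uhat)$ with the cell values of the piecewise-constant realization $\Pi_K[\uhat]$, so that $f\circ\Pi_K[\uhat]$ lies again in the span of the wavelet system~\eqref{WaveletSystem} and its modes are exactly $\VPhaar f\big(\DP(\uhat)\big)\VPhaar^\T\UNIT$ for any scalar map $f$. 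Your route buys two things: real exponents $\gamma$ are covered directly, closing a small gap in the paper's written proof, and statement (ii) receives a direct consistency argument instead of being inferred backwards from (iii). What the paper's purely algebraic route buys is independence from the piecewise-constant picture: it survives for any expansion whose Galerkin matrices commute (for instance the piecewise-linear system $\mathbb{W}_{\Xi}$ discussed in Section~\ref{SectionHaar}), where your cell-value identification no longer applies. Both treatments handle the non-differentiability of $|\cdot|$ and $\sign$ on the set $\{\DP_k(\uhat)=0\}$ in the same informal generalized-gradient sense, so no additional gap is introduced there.
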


\begin{proof} \textit{Statement (i):}
	According to~\cite[Th.~2]{Sonday2011} and \cite[Th.~2.1]{H3}, it holds~$\uhat\in\mathbb{H}^+_0$, i.e.~$\DP(\uhat)\geq 0$, provided that 
	all realizations~$\u\big(\xi(\omega)\big) \in \mathbb{R}^+_0$  are non-negative. Due to Lemma~\ref{Lemma0} and~Lemma~\ref{Lemma2}, which implies~$
	\D_{\uhat}  \DP(\uhat) \VPhaar^\T \UNIT
	=\VPhaar^\T
	$, we have for the exponent~$\gamma=\nicefrac{m}{n}\in\mathbb{Q}^+_0$ and arbitrary~$m\in\mathbb{N}_0$, $n\in\mathbb{N}$  the gPC modes
	\begin{align*}
		&\widehat{p}(\uhat) 
		=
		\mathcal{T}_m\Big[ \mathcal{T}^{-1}_n  \big[
		\uhat
		\big]\Big]  
		=
		\VPhaar \DP(\uhat)^\gamma \VPhaar^\T \UNIT, \\
		&\text{with Jacobian}\quad
		\D_{\uhat} \widehat{p}(\uhat) 
		=
		\gamma
		\VPhaar  \DP(\uhat)^{\gamma-1}  \D_{\uhat}  \DP(\uhat) \VPhaar^\T \UNIT
		=
		\gamma
		\VPhaar  \DP(\uhat)^{\gamma-1} \VPhaar^\T.
	\end{align*}

	
	\noindent
	\textit{Statement (iii):} 
	Due to~$\mathcal{T}_2[\widehat{\u}]\in \mathbb{H}^+_0$ for all~$\widehat{\u}\in\mathbb{R}^{K+1}$, the gPC modes for the absolute value read as 
	$$
	\widehat{\,\vert\u\vert\,}=
	\mathcal{T}^{-1}_2\Big[ \mathcal{T}_2  \big[
	\uhat
	\big]\Big]
	=
	\VPhaar
	\sqrt{
		\DP^{ 2 } (\uhat)}\VPhaar^\T \UNIT
	=
	\VPhaar
	\big\vert
	\DP (\uhat) \big\vert \VPhaar^\T \UNIT.
	$$
	\textit{Statement~(ii):} This statement follows from
	\begin{align*}
		&\widehat{\,\vert\u\vert\,}
		=
		\VPhaar\Big[
		\sign\big\{ \DP(\uhat) \big\} \DP (\uhat) \Big] \VPhaar^\T \UNIT
		=
		\shat(\uhat) \ast \uhat 
		\quad\text{and}\\
		&\Big\lVert \Pi_K\big[\shat(\uhat) \ast \uhat \big]
		-
		\sign\big\{ \u(\xi) \big\} \u(\xi)
		\Big\rVert_{\mathbb{P}}
		\rightarrow 0
		\quad\text{for}\quad
		K\rightarrow \infty. 
	\end{align*}
	
	\noindent
	\textit{Statement (iv):} 
	For a~$d$-dimensional random quantitiy~$\u(\xi) = \big( \u_1(\xi),\ldots,\u_d(\xi)\big)^\T$, we obtain the gPC modes for the $p$-norm by
	$$
	\widehat{\ \lVert\u\rVert_p\,}
	=
	\mathcal{T}^{-1}_p\Bigg[
	\sum\limits_{i=1}^d
	\widehat{\,\vert\u_i\vert\,}^{\ast p}
	\Bigg]
	\ \ \text{with}\ \
	\widehat{\,\vert\u_i\vert\,}^{\ast p}
	=
	\P^p\Big(
	\widehat{\,\vert\u_i\vert\,}
	\Big)
	\UNIT
	=
	\VPhaar
	\big\vert
	\DP (\uhati) \big\vert^p \VPhaar^\T \UNIT \in \mathbb{H}^+_0.
	$$
	Lemma~\ref{Lemma0} yields~$\D_{\uhat} \widehat{\,\vert\u\vert\,}
	=\VPhaar
	\D_{\alphahat} \DP(\alphahat)\big\vert_{\alphahat=\uhat} 
	\VPhaar^\T\shat(\uhat) 
	=\P\big(\shat(\uhat)\big)
	$. 
	Hence, we have
	\begin{align*}
		\D_{\uhat} 
		\widehat{\,\vert\u\vert\,}^{\ast p}
		&= 
		p \VPhaar \big\vert\DP(\uhat)\big\vert^{p-1} 
		\D_{\uhat} \big\vert\DP(\uhat)\big\vert 
		\VPhaar^\T\UNIT\\
		&=
		p \VPhaar \big\vert\DP(\uhat)\big\vert^{p-1} \VPhaar^\T
		\D_{\uhat} \widehat{\,\vert\u\vert\,}\\
		&=
		\VPhaar\Big[
		p  \big\vert\DP(\uhat)\big\vert^{p-1} \DP\big(\shat(\uI) \big)
		\Big]
		\VPhaar^\T. 
	\end{align*}
	Then, the chain rule implies 
	\begin{align*}
		\D_{\uI} 
		\widehat{\ \lVert\u\rVert_p\,}
		&=
		\VPhaar \D_{\chat} \Big[
		\DP^{\frac{1}{p}} \big(\chat(\uhat)\big) \UNIT
		\Big] \D_{\uI} \widehat{\, \vert\u_i\vert \,}^{\ast p} \\
		&=
		\VPhaar \Big[ 
		\DP^{\frac{1}{p}-1} \big(\chat(\uhat)\big) 
		\big\vert\DP (\uhati)\big\vert^{p-1} 
		\DP\big( \shat(\uhati) \big)
		\Big]
		\VPhaar^\T.
	\end{align*}
\end{proof}

\noindent
We emphasize that all gPC modes, which are presented in~Theorem~\ref{LemmaHaar}, are stated in closed form. More precisely, they only depend on the eigenvalues~$\DP(\uhat) = \VPhaar^\T\P(\uhat) \VPhaar$ that are directly obtained, since the Haar-type matrix~$\VPhaar$ and the linear operator~$\P(\uhat)$ are known.

\section{Comparison between Haar-type and general polynomial chaos expansions}\label{SectionChallenges}
It has been emphasised in \cite{S15} that the representation of non-polynomial quantities by polynomial chaos expansions is challenging. 
In particular, one may define the square root to gPC modes~$\uhat\in\mathbb{R}^{K+1}$ as a solution to the nonlinear system~$\P(\alphahat)\alphahat =\uhat$. However, the solution is not necessarily unique.  To this end, it has been proposed in~\cite[Lem.~3.1]{GersterHertyCicip2020} to identify the square root as the  minimum 
\begin{align*}
	&	\squareRoot
	\coloneqq \argmin\limits_{\alphahat \in \AdmissibleSet^+} \Big\{ \entropy^{(2)}_{\uhat}(\alphahat) \Big\}
	\ \  \text{of the  function}  \ \
	\entropy^{(2)}_{\uhat}(\alphahat) \coloneqq \frac{\UNIT^\T \P^{3}(\alphahat)\UNIT}{3}-\uhat^\T\alphahat \\
	& 
	\text{with gradient}\quad
	\nabla_{\alphahat}	\entropy^{(2)}_{\uhat}(\alphahat) 
	=
	\P(\alphahat)\alphahat - \uhat
	\quad
	\text{and Hessian}
	\quad
	\nabla^2_{\alphahat}	\entropy^{(2)}_{\uhat}(\alphahat) 
	=
	2\P(\alphahat). 
\end{align*}
Hence, this function is convex on the open set~$\AdmissibleSet^+$ and has a unique minimum that satisfies the nonlinear system~$\P(\alphahat)\alphahat =\uhat$. 
These results are partially extended to general~$n$-roots in the following proposition.

\begin{proposition}\label{Lemma1}
	Denote the orthonormal eigenvalue decomposition of the stochastic Galerkin matrix by~$
	\P(\uhat)=\VP(\uhat)\DP(\uhat)\VP^\T(\uhat)$ and define the function
	$$
	\entropy^{(n)}_{\uhat}(\alphahat) \coloneqq \frac{\UNIT^\T \P^{n+1}(\alphahat)\UNIT}{n+1}-\uhat^\T\alphahat. 
	$$	
	Then, the gradient reads as
	\begin{equation*}
		\begin{aligned}
			\nabla_{\alphahat} \entropy^{(n)}_{\uhat}(\alphahat)
			&\, =
			\P^n(\alphahat) \UNIT-\uhat
			+
			\frac{ \mathcal{E}_{n}(\alphahat) }{n+1} 
			\qquad\text{with}
			\\
			\mathcal{E}_{n}(\alphahat)
			&\coloneqq
			\sum\limits_{k=0}^{n}
			\alphahat^{\ast (n-k)} 
			\ast
			\alphahat^{\ast k}
			-
			(n+1)
			\alphahat^{\ast n}.
		\end{aligned}
	\end{equation*}

\end{proposition}

\begin{proof}
	Lemma~\ref{Lemma0} implies the Jacobian
	\begin{alignat}{8}
		&\D_{\alphahat}
		\Big[
		\UNIT^\T 
		\P^{n+1}(\alphahat)
		\UNIT
		\Big]
		&&=
		\UNIT^\T
		\VP(\alphahat)
		\D_{\bar{\alpha}} \DP^{n+1}(\bar{\alpha}) \big\vert_{\bar{\alpha} = \alphahat}
		\VP^\T(\alphahat) \UNIT
		&&+
		\UNIT^\T
		\mathcal{E}_{n}^\T(\alphahat) \nonumber \\
		& &&=
		(n+1)	
		\UNIT^\T
		\P^n(\alphahat)
		&&+
		\UNIT^\T
		\mathcal{E}_{n}^\T(\alphahat) \label{Lemma2proof1}
	\end{alignat}
	$$\quad
	\text{with}\quad
	\mathcal{E}_{n}^\T(\alphahat)
	\coloneqq
	\D_{\bar{\alpha}}
	\VP(\bar{\alpha})\big\vert_{\bar{\alpha} = \alphahat}
	\DP^{n+1}(\alphahat)
	\VP^{\T}(\alphahat)
	\UNIT 
	+
	\VP(\alphahat)
	\DP^{n+1}(\alphahat)
	\D_{\bar{\alpha}}
	\VP^{\T}(\bar{\alpha})\big\vert_{\bar{\alpha} = \alphahat} 
	\UNIT. 
	$$
	
	\noindent	
	Due to~$
	\alphahat^{\ast n}
	=
	\P^{n}(\alphahat) \UNIT
	$, 
	$
	\D_{\bar{\alpha}}
	\P(\bar{\alpha})\big\vert_{\bar{\alpha}=\alphahat}
	\alphahat^{\ast n}
	=
	\P\big(\alphahat^{\ast n}  \big)
	$ 
	and the product rule for matrices, we have
	\begin{align}
		\D_{\alphahat} \Big[		\UNIT^\T
		\P^{n+1}(\alphahat)
		\UNIT
		\Big]
		&=
		\UNIT^\T
		\sum\limits_{k=0}^{n}
		\P^k(\alphahat)
		\D_{\bar{\alpha}}
		\P(\bar{\alpha})\big\vert_{\bar{\alpha}=\alphahat}
		\alphahat^{\ast (n-k)} \nonumber \\
		&=
		\UNIT^\T
		\sum\limits_{k=0}^{n}
		\P^k(\alphahat)
		\P \left( \alphahat^{\ast (n-k)} \right).
		\label{Lemma2proof2}
	\end{align}

	\noindent
	By subtracting equation~\eqref{Lemma2proof2}  from~\eqref{Lemma2proof1}, we obtain the expression
	\begin{equation*} 
		\begin{aligned}
			\mathcal{E}_{n}(\alphahat)\UNIT
			&=
			\sum\limits_{k=0}^{n}
			\P \left( \alphahat^{\ast (n-k)} \right)
			\P^k(\alphahat)
			\UNIT
			-
			(n+1)
			\P^n(\alphahat) \UNIT  \\
			&=
			\sum\limits_{k=0}^{n}
			\alphahat^{\ast (n-k)} 
			\ast
			\alphahat^{\ast k}
			-
			(n+1)
			\alphahat^{\ast n}.
		\end{aligned}
	\end{equation*}		
\end{proof}

\noindent
In principle, general roots may be obtained by minimizing the function~$\entropy^{(n)}_{\uhat}(\alphahat)$, since the error~$
\mathcal{E}_{n}(\alphahat)
$ decreases when the number of gPC bases is increased. More precisely, 
the term~$
\mathcal{E}_{n}(\alphahat)
$	
describes projection errors in the repeated Galerkin products that occur, since the Galerkin product is in general not associative as stochastic Galerkin  matrices  generally do not commute~\cite{S4}. 
Hence, the truncation~$K\in\mathbb{N}_0$ must be sufficiently large compared to the moment~$n\in\mathbb{N}$ and an optimization problem occurs, which causes noteworthy computational cost. Furthermore, convexity and hence a unique solution is not necessarily ensured on the whole domain~$\AdmissibleSet^+$. 
In contrast, the associativity of the Galerkin product in the special case of Haar-type  expansions makes the error~$\mathcal{E}_n=0$ vanish for all~$n\in \mathbb{N}$. The  minimum  
\begin{equation}\label{generalRoot}
	\nRoot
	\coloneqq \argmin\limits_{\alphahat \in \AdmissibleSet^+} \Big\{ \entropy^{(n)}_{\uhat}(\alphahat) \Big\}
	\quad  \text{satisfies}  \quad
	\P^n\left( \nRoot \right) \UNIT = \uhat
\end{equation}	
and is unique, since the Hessian~$\nabla^2_{\alphahat}	\entropy^{(n)}_{\uhat}(\alphahat) 
=
n\P^{n-1}(\alphahat)$ is strictly positive definite for all states~$\alphahat\in\AdmissibleSet^+$. 
Therefore, the inverse~$\mathcal{T}_n^{-1}(\uhat)$, which is defined in Lemma~\ref{Lemma2}, 
is the explicit solution to the well-posed optimization problem~\eqref{generalRoot} for Haar-type expansions.

\section{Applications to hyperbolic conservation laws}\label{SectionConservationLaws}
\noindent
This section presents eigenvalue decompositions for hyperbolic systems with Lipschitz continuous flux functions that involve the non-polynomial expressions derived in Theorem~\ref{LemmaHaar} using Haar-type expansions with uniform distribution~$\xi\sim\mathcal{U}(0,1)$. In the sequel, we consider only these Haar-type expansions with uniform distribution. Furthermore, the domain, where the solution is defined, is stated precisely in terms of the convex sets~\eqref{Definitionsbereich}. 

\subsection{Lipschitz continuous flux function}\label{SectionCorollary1}

We consider the example~\cite[Ex.~3.1]{Guelmame2019}, where the scalar conservation law 
\begin{equation}\label{Oleinik}
	\begin{aligned}
		&\partial_t\u(t,x,\xi)
		+
		\partial_x
		\Big(
		\u^2(t,x,\xi) + \big\vert
		\u(t,x,\xi) 
		\big\vert
		\Big)
		=0 \\
		&\text{with initial values}
		\quad
		\u_0(x,\xi) 
		=
		\sign\big( \widehat{x}(x,\xi)  \big)
	\end{aligned}
\end{equation}
and  initial discontinuity at the point~$
\widehat{x}\big(x,\xi(\omega)\big)\coloneqq 
x- \left(\xi(\omega)-\nicefrac{1}{2}\right)
$~for the random variable~$\xi\sim\mathcal{U}(0,1)$ is analyzed. 
According to~\cite{Guelmame2019}, the pointwise entropy solution is
\begin{equation}\label{SolutionCorollary1}
	\u^{\textup{ref}}(t,x,\xi)
	=
	\begin{cases}
		-1 					& \text{if \ \ } \frac{\widehat{x}(x,\xi)}{t} \in (-\infty,-3),	\\
		\frac{1}{2} \Big(\frac{\widehat{x}(x,\xi)}{t}+1\Big)	& \text{if \ \ } \frac{\widehat{x}(x,\xi)}{t} \in [-3,-1),		\\
		0					& \text{if \ \ } \frac{\widehat{x}(x,\xi)}{t} \in [-1,1),		\\
		\frac{1}{2} \Big(\frac{\widehat{x}(x,\xi)}{t}-1\Big)	& \text{if \ \ } \frac{\widehat{x}(x,\xi)}{t} \in [1,3),			\\
		1					& \text{if \ \ } \frac{\widehat{x}(x,\xi)}{t} \in [3,\infty).
	\end{cases}
\end{equation}
We note that there is a constant intermediate  state due to a 
discontinuity in the characteristic speed. 
The following corollary states the intrusive form. 

\begin{corollary}\label{Corollary1}
	Haar-type stochastic Galerkin formulations to the conservation law~\eqref{Oleinik} read as
	\begin{align*}
		&\partial_t \uhat
		+
		\partial_x
		\Big(
		\uhat^{\ast 2}
		+
		\uhat \ast \shat(\uhat)
		\Big)=\veco
		\quad\text{with initial values}\quad
		\uhat_k(0,x)
		=
		\Big\langle \u_0(x,\xi) , \phi_k(\xi) \Big\rangle_{\mathbb{P}}
		\\
		&
		\text{and generalized Jacobian}\quad
		\D_{\uhat} \widehat{f}(\uhat)
		=
		2 \P(\uhat) + \shat(\uhat)
		=
		\VPhaar \Big[ \, 2\DP(\uhat)+ \sign\big\{ \DP(\uhat) \big\} \Big] \VPhaar^\T.
	\end{align*}
	It is defined for all states~$\uhat \in \mathbb{R}^{K+1}$.
	
\end{corollary}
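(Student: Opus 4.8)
The plan is to project the flux $f(\u)=\u^2+|\u|$ termwise onto the Haar-type basis and then differentiate the resulting closed-form expression. First I would treat the quadratic term: by Lemma~\ref{Lemma1}(i) (with $m=2$) the Galerkin projection of $\u^2$ is the repeated Galerkin product $\uhat^{\ast 2}=\P(\uhat)\uhat$, which is consistent in the sense of~\eqref{gPCseries}. For the absolute value I would invoke Theorem~\ref{LemmaHaar}(iii) directly, which gives the projection $\widehat{\,|\u|\,}=\shat(\uhat)\ast\uhat=\uhat\ast\shat(\uhat)$ together with its consistency. Summing the two contributions yields the intrusive flux $\widehat{f}(\uhat)=\uhat^{\ast 2}+\uhat\ast\shat(\uhat)$ and hence the stated evolution equation. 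The initial datum is obtained simply by projecting $\sign(\widehat{x})$ onto the basis, i.e.\ by taking the coefficients $\big(\langle\sign(x\xi),\phi_k(\xi)\rangle_{\mathbb{P}}\big)_{k=0,\ldots,K}$; no further argument is needed here.

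Next I would compute the generalized Jacobian of $\widehat{f}$. For the quadratic part I would apply the product rule to $\uhat^{\ast2}=\P(\uhat)\uhat$ together with the identity $\D_{\bar{\alpha}}\P(\bar{\alpha})\big|_{\bar{\alpha}=\uhat}\uhat=\P(\uhat)$ — which rests on the full symmetry of the triple products $\langle\phi_i\phi_j\phi_k\rangle_{\mathbb{P}}$ and is already used in the proof of Lemma~\ref{Lemma1} — to conclude $\D_{\uhat}\uhat^{\ast2}=2\P(\uhat)$. For the absolute-value part I would read off $\D_{\uhat}\widehat{\,|\u|\,}=\VPhaar\sign\{\DP(\uhat)\}\VPhaar^\T$ from Theorem~\ref{LemmaHaar}(iii). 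Adding the two and factoring out the \emph{constant} Haar eigenvectors via $\P(\uhat)=\VPhaar\DP(\uhat)\VPhaar^\T$ gives
$$
\D_{\uhat}\widehat{f}(\uhat)
=
2\P(\uhat)+\VPhaar\sign\{\DP(\uhat)\}\VPhaar^\T
=
\VPhaar\big[\,2\DP(\uhat)+\sign\{\DP(\uhat)\}\,\big]\VPhaar^\T,
$$
which is the claimed diagonalization.

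Finally I would establish that the formulation is well defined on all of $\mathbb{R}^{K+1}$. Because $\P(\uhat)$ is symmetric, its spectrum $\DP(\uhat)$ is real for \emph{every} $\uhat$, and since both $|\cdot|$ and $\sign$ are defined on all of $\mathbb{R}$ (with the convention $\sign(0)=0$), the eigenvalues $2d_k(\uhat)+\sign\{d_k(\uhat)\}$ are real-valued without any positivity requirement. This is exactly where the Haar-type structure removes the local domain restriction $\bar{\AdmissibleSet}^+$ of Lemma~\ref{Lemma1}: no optimization problem, no implicit-function-theorem invertibility, and no strict positive definiteness are needed. The one point deserving care — and the only genuine obstacle — is that $|\u|$ is not differentiable at the origin, so $\D_{\uhat}\widehat{f}$ must be read as a generalized (Clarke-type) Jacobian; the closed form $\VPhaar\big[\,2\DP(\uhat)+\sign\{\DP(\uhat)\}\,\big]\VPhaar^\T$ is the correct generalized derivative precisely because the scalar $\sign$ acts on the real eigenvalues with $\sign(0)=0$, matching the set-valued subdifferential of $t\mapsto|t|$ at $t=0$. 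Everything else is a routine consequence of the results already proven.
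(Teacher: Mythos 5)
Your proposal is correct and follows essentially the same route as the paper, whose proof simply observes that the flux and its generalized Jacobian follow directly from Theorem~\ref{LemmaHaar}; you merely make explicit the termwise projection of $\u^2$ and $|\u|$, the identity $\D_{\bar{\alpha}}\P(\bar{\alpha})\big|_{\bar{\alpha}=\uhat}\uhat=\P(\uhat)$ giving $\D_{\uhat}\uhat^{\ast 2}=2\P(\uhat)$, and the factorization through the constant Haar eigenvectors. Your closing remarks on the unrestricted domain and the Clarke-type reading of the Jacobian at $\DP(\uhat)=0$ are consistent with the paper's discussion and add nothing that conflicts with it.
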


\begin{proof}
	The expressions for the flux function and its Jacobian follow directly from~Theorem~\ref{LemmaHaar}. 
	
\end{proof}

\subsection{Level set equations}
Two-dimensional level set equations in Hamilton-Jacobi form read as
\begin{equation}\label{detLevelSet}
	\partial_t \varphi(t,\x) + v(\x) \big\lVert \nabla_{\x} \varphi(t,\x) \big\rVert_2   = 0
	\ \ \text{for}\ \ 
	\varphi(0,\x)  \in\mathbb{R}
	\ \ \text{and}\ \
	\x=(x_1,x_2)^\T.
\end{equation}
In the sense of viscosity solutions~\cite{Caselles1992,Crandall1983}, the scalar equation~\eqref{detLevelSet} is equivalent to the {hyperbolic system}
\begin{equation}\label{2dLevelSet}
	\begin{aligned}
		&\partial_t \u(t,\x)
		+
		\partial_{x_1}
		f_1\big(\u(t,\x),v(\x)\big)
		+
		\partial_{x_2}
		f_2\big(\u(t,\x),v(\x)\big)
		=0 \\
		& 
		\text{with flux functions}\quad
		f_1(\u,v)
		=
		\begin{pmatrix}
			v \lVert \u \rVert_2 \\ 0
		\end{pmatrix},
		\quad
		f_2(\u,v)
		=
		\begin{pmatrix}
			0 \\
			v \lVert \u \rVert_2
		\end{pmatrix}
	\end{aligned}
\end{equation}
for~$\u\coloneqq (\u_1,\u_2)^\T$ 
and initial conditions~$\u(0,\x)=\nabla_{\x}\varphi_0(\x) \in\mathbb{R}^2$.
\noindent
The two-dimensional system~\eqref{2dLevelSet} is hyperbolic in the sense that for all unit vectors~${\vec{n}=(n_1,n_2)^\T}$ the matrix
\begin{equation}\label{DeterministicJacobian}
	n_1 \D_{\u} f_1(\u,v) 
	+ 
	n_2 \D_{\u} f_2(\u,v)
	=
	\frac{v}{\lVert \u \rVert_2}
	\begin{pmatrix}
		n_1 \u_1 & n_1 \u_2 \\
		n_2 \u_1 & n_2 \u_2
	\end{pmatrix}
\end{equation}
is diagonalizable with real eigenvalues and a complete set of eigenvectors. 
Note that the Jacobian~\eqref{DeterministicJacobian} reduces in the spatially one-dimensional case to~$
\D_{\u} f(\u,v) = v\, \partial_{\u} \vert\u\vert
$. 
Here, the derivative of the norm is understand as the~generalized gradient~\cite{Clarke1990,Frankowska1989, LipschitzFlux2001}. 
The following theorem states the corresponding stochastic Galerkin formulation and the generalized real characteristic speeds. 

\begin{corollary}\label{Corollary2}
	Assume a Haar-type expansion with gPC modes~$\v$  that account for  random velocities~$v(\xi)$.	
	Then, a stochastic Galerkin formulations for the two-dimensional level set equations~\eqref{2dLevelSet} read as
	\begin{align*}
		&\partial_t \widehat{\u}(t,\x)
		+
		\partial_{x_1}
		\widehat{f_1}\Big(\widehat{\u}(t,\x),\v(\x)\Big)
		+
		\partial_{x_2}
		\widehat{f_2}\Big(\widehat{\u}(t,\x),\v(\x)\Big)
		=\veco \\
		&\text{with flux functions}
		\quad
		\widehat{f_1}\big(\widehat{\u},\v\big)
		=
		\begin{pmatrix}
			\v \ast
			\widehat{ \,
				\lVert \u \rVert_2
				\, } \\ \veco
		\end{pmatrix},\quad
		\widehat{f_2}\big(\widehat{\u},\v\big)
		=
		\begin{pmatrix}
			\veco \\
			\v \ast
			\widehat{ \,
				\lVert \u \rVert_2
				\, }
		\end{pmatrix}.
	\end{align*}
	
	\noindent
	The real spectrum\footnote[1]{With a slight abuse of notation the spectrum is stated in terms of diagonal matrices and~$\zeros$ denotes the zero matrix.} is
	$$
	\sigma\bigg\{
	\vec{n} \cdot
	\D_{\uhat} \widehat{f}\big(\widehat{\u},\v\big) 
	\bigg\}
	=
	\Bigg\{\
	\DP(\v)
	\DP\Big(
	\widehat{ \,
		\lVert \u \rVert_2
		\, }
	\Big)^{-1}
	\Big(
	n_1 \DP\big(\uhatI\big)+n_2 \DP\big(\uhatII\big)
	\Big),\ \zeros\
	\Bigg\},
	$$
	
	
	\noindent
	where eigenvalues read as~$
	\DP\big(
	\widehat{ \,
		\lVert \u \rVert_2
		\, }
	\big)
	=
	\sqrt{\DP\big(\uhatI\big)^2 +\DP\big(\uhatII\big)^2}
	$. 
	It is defined for all states~$\uhat\in\mathbb{R}^{2 (K+1)}$  and satisfies 
	in the sense of generalized gradients  the relation
	$$
	\sigma\Big\{
	\vec{n} \cdot
	\D_{\uhat} \widehat{f}\big(\uhat,\v\big)
	\bigg\}
	=
	\Big\{\
	\DP(\v)\,
	\sign\big\{
	\DP(\uhati)
	\big\}, \ \zeros \
	\Big\}
	\quad\text{for}\quad
	\uhatj = \veco
	\quad\text{with}\quad
	j\neq i.
	$$
\end{corollary}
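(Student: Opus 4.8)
The plan is to read off both the flux functions and the full eigenstructure directly from Theorem~\ref{LemmaHaar}(iv) specialised to $p=2$, exploiting throughout that a Haar-type expansion has constant eigenvectors $\VPhaar$, so that every stochastic Galerkin matrix occurring here is simultaneously diagonalised by $\VPhaar$. First I would record the flux functions: replacing the deterministic product $v\lVert\u\rVert_2$ by the Galerkin product $\v\ast\widehat{\,\lVert \u \rVert_2\,}=\P(\v)\,\widehat{\,\lVert \u \rVert_2\,}$ reproduces $\widehat{f_1}$ and $\widehat{f_2}$ verbatim. The eigenvalue identity $\DP(\widehat{\,\lVert \u \rVert_2\,})=\sqrt{\DP(\uhatI)^2+\DP(\uhatII)^2}$ then follows because $\chat(\uhat)=\sum_i\widehat{\,|\u_i|\,}^{\ast 2}$ has, by Theorem~\ref{LemmaHaar}(iv), eigenvalues $\DP(\uhatI)^2+\DP(\uhatII)^2$ (constant eigenvectors make the Galerkin squares additive), and the $p$-norm applies the $\tfrac12$-power.

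The central computation is the Jacobian. Differentiating the two flux blocks and inserting the Jacobian formula of Theorem~\ref{LemmaHaar}(iv) with $p=2$, I would simplify $|\DP(\uhati)|\,\DP(\shat(\uhati))=|\DP(\uhati)|\,\sign\{\DP(\uhati)\}=\DP(\uhati)$ and $\DP^{-1/2}(\chat(\uhat))=\DP(\widehat{\,\lVert \u \rVert_2\,})^{-1}$ to obtain $\D_{\uhati}\widehat{\,\lVert \u \rVert_2\,}=\VPhaar\,\DP(\widehat{\,\lVert \u \rVert_2\,})^{-1}\DP(\uhati)\,\VPhaar^\T$. Multiplying by $\P(\v)=\VPhaar\DP(\v)\VPhaar^\T$ (constant eigenvectors, cf.~\eqref{Commutativity}) shows that every block of $\vec{n}\cdot\D_{\uhat}\widehat{f}$ has the form $\VPhaar A_i\VPhaar^\T$ with the diagonal matrix $A_i\coloneqq\DP(\v)\DP(\widehat{\,\lVert \u \rVert_2\,})^{-1}\DP(\uhati)$.

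The directional Jacobian is then the $2\times 2$ block matrix with blocks $n_1A_1,\,n_1A_2,\,n_2A_1,\,n_2A_2$, which factors as $\diag\{\VPhaar,\VPhaar\}$ times a middle matrix times $\diag\{\VPhaar^\T,\VPhaar^\T\}$; since $\VPhaar$ is orthogonal this is a similarity and preserves the spectrum. Because $A_1,A_2$ are diagonal, a permutation brings the middle matrix to block-diagonal form with $|\K|$ blocks $\begin{pmatrix} n_1 a_1 & n_1 a_2 \\ n_2 a_1 & n_2 a_2\end{pmatrix}=\begin{pmatrix} n_1 \\ n_2 \end{pmatrix}\begin{pmatrix} a_1 & a_2 \end{pmatrix}$, each of rank one. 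Hence each block has determinant zero and real trace $n_1a_1+n_2a_2$, so its eigenvalues are $\zeros$ and $\DP(\v)\DP(\widehat{\,\lVert \u \rVert_2\,})^{-1}\big(n_1\DP(\uhatI)+n_2\DP(\uhatII)\big)$, which is exactly the claimed real spectrum. The domain $\uhat\in\mathbb{R}^{2|\K|}$ is unrestricted because $\sign$ and $|\cdot|$ are defined for every diagonal entry.

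Finally, for $\uhatj=\veco$ one has $\widehat{\,\lVert \u \rVert_2\,}=\widehat{\,|\u_i|\,}$, so $\DP(\widehat{\,\lVert \u \rVert_2\,})^{-1}\DP(\uhati)=\sign\{\DP(\uhati)\}$ and the nonzero eigenvalue collapses to the one-dimensional reduction $\DP(\v)\sign\{\DP(\uhati)\}$ of \eqref{DeterministicJacobian}. I expect the one genuine obstacle to lie precisely here: where eigenvalues of $\widehat{\,\lVert \u \rVert_2\,}$ vanish, the factor $\DP(\widehat{\,\lVert \u \rVert_2\,})^{-1}$ is undefined, so the product $\DP(\widehat{\,\lVert \u \rVert_2\,})^{-1}\DP(\uhati)$ and the limiting signum must be interpreted through the generalized gradient of the norm, consistent with the non-smooth reading already used for $\D_{\uhat}\widehat{\,|\u|\,}$ in Theorem~\ref{LemmaHaar}(iii); checking that the rank-one, zero-determinant structure persists in this generalized sense is the step that requires care.
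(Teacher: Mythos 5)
Your proposal is correct and follows essentially the same route as the paper: both derive $\D_{\uhati}\widehat{\,\lVert \u \rVert_2\,}=\VPhaar\,\DP\big(\widehat{\,\lVert \u \rVert_2\,}\big)^{-1}\DP(\uhati)\,\VPhaar^\T$ from Theorem~\ref{LemmaHaar}(iv), conjugate the block Jacobian by $\diag\{\VPhaar,\VPhaar\}$, and read off the spectrum from the resulting structure. Your explicit permutation to $2\times 2$ rank-one blocks merely spells out what the paper compresses into ``due to the sparse structure,'' and your closing caveat about interpreting $\DP\big(\widehat{\,\lVert \u \rVert_2\,}\big)^{-1}$ via generalized gradients where eigenvalues vanish is a legitimate point the paper likewise handles only by invoking the generalized-gradient convention.
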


\begin{proof}
	According to Theorem~\ref{LemmaHaar}, it holds~$
	\D_{\uhati} \widehat{ \,
		\lVert \u \rVert_2
		\, }
	=
	\VPhaar\,
	\DP\big(
	\widehat{\, \lVert \u \rVert \,}
	\big)^{-1}
	\DP\big(
	\uhati
	\big)\,
	\VPhaar^\T
	$. 	
	Defining the block diagonal matrix~$
	\widehat{\VPhaar}
	\coloneqq
	\diag\{1,1\}\otimes\VPhaar, 
	$
	we have the Jacobian
	\begin{equation*}
		\J (\widehat{\u},\v)
		=
		\widehat{\VPhaar} \
		\Bigg[
		\diag\{1,1\}\otimes
		\left(
		\DP\big(\v\big)
		\DP\Big(
		\widehat{\, \lVert \u \rVert \,}
		\Big)^{-1}
		\right)
		\Bigg]
		\begin{pmatrix}
			n_1 \DP\big(\widehat{\,\u_1}\big) & n_1 \DP\big(\widehat{\,\u_2}\big) \\
			n_2 \DP\big(\widehat{\,\u_1}\big) & n_2 \DP\big(\widehat{\,\u_2}\big) 
		\end{pmatrix}
		\
		\widehat{\VPhaar}^\T.
	\end{equation*}
	Due to the sparse   structure, the real spectrum is obtained. In particular for~$\uhatj = \veco$, we have
	\begin{align*}
		\DP(\v)
		\DP\Big(
		\widehat{ \,
			\lVert \u \rVert_2
			\, }
		\Big)^{-1}
		\Big(
		n_1 \DP\big(\uhatI\big)+n_2 \DP\big(\uhatII\big)
		\Big)
		&=
		n_i \DP(\v) \big\vert \DP(\uhati) \big\vert^{-1} \DP(\uhati) \\
		&=
		n_i\DP(\v)  \sign\big\{  \DP(\uhati)  \big\}.
	\end{align*} 	
\end{proof}

\subsection{Gas flow with Lipschitz continuous pressure law}\label{Section43}

We consider a model from~\cite[Sec.~5]{LipschitzFlux2001} related to the $p$-system. The unknowns~$\u\coloneqq (u,v)^\T$ are~the velocity~$u(t,x)$ and the specific volume~$v(t,x)$ of a fluid.  A Lipschitz continuous pressure is stated in terms of the specific volume that  satisfies~$
p_{-}(v^*)<p_{+}(v^*) 
$ 
and~$
p'(v)<0
$,~$
p''(v)>0
$~for~$v\neq v^*$. 
The hyperbolic system reads as $\partial_t \u + \partial_x f(\u)=0$ with flux function
\begin{equation}\label{LeFloch}
	f(\u)
	=
	\begin{pmatrix}
		p(v) \\- u
	\end{pmatrix}
	\quad \text{and pressure}\quad
	p(v)
	\coloneqq
	\begin{cases}
		v^{-\gamma_{1}} & \text{if \  } v<v_{*}, \\
		v^{-\gamma_{2}} 
		+\Delta v_{*}
		& \text{if \  } v>v_{*}, \\
	\end{cases}
\end{equation} 
where the value~$\Delta v_{*}\coloneqq v_{*}^{\gamma_1}-v_{*}^{\gamma_2}$ is chosen such that the pressure law is Lipschitz continuous. 
The following theorem states the stochastic Galerkin formulation that accounts for a Lipschitz continuous random pressure law. 

\begin{corollary}\label{Corollary3}
	Assume a Haar-type expansion with gPC modes~$\vhatStar$ and $\DvhatStar$ that accounts for a random state~$v_{*}(\xi)$. Then, a stochastic Galerkin formulation	
	to the system~\eqref{LeFloch}, describing the gPC modes~$\uhat\coloneqq(\widehat{u},\widehat{v})^\T$ for~$\widehat{u}\in\mathbb{R}^{K+1}$ and~$\widehat{v}\in\mathbb{H}^+$, reads as
	\begin{align*}
		&\partial_t \widehat{\u}(t,\x)
		+
		\partial_{x}
		\widehat{f}\big(\widehat{\u}(t,\x)\big)
		=\veco 
		\quad\text{with flux function}\quad
		\widehat{f}(\uhat) =
		\begin{pmatrix}
			\widehat{p}(\widehat{v}) \\
			\widehat{u}
		\end{pmatrix}\\
		&\begin{aligned}
			\text{and pressure law}\quad
			\widehat{p}(\widehat{v})
			=
			&-\frac{1}{2} 
			\Big[ \shat\big( \widehat{v}-\vhatStar \big) - \UNIT\Big] \ast \widehat{\,v^{-\gamma_1}} \\
			&+
			\frac{1}{2} 
			\Big[ \shat\big( \widehat{v}-\vhatStar \big) + \UNIT\Big] \ast \Big(\widehat{\,v^{-\gamma_2}}
			+\DvhatStar\Big).
		\end{aligned}
	\end{align*}	
	Furthermore, the generalized spectrum is
	$
	\sigma\big\{
	\D_{\uhat} \widehat{f}(\uhat)
	\big\}
	=
	\big\{\pm
	\DP_{p'}^{\nicefrac{1}{2}}(\widehat{v})
	\big\}
	$ with 
	$$
	\DP_{p'}^{\nicefrac{1}{2}}(\widehat{v})
	\coloneqq
	\frac{\sqrt{\gamma_1}}{2}
	\bigg[
	\DP\Big( \shat\big( \widehat{v}-\vhatStar\big)\Big) - \indikator\Big) \bigg]
	\DP^{-\frac{\gamma_1+1}{2}} (\widehat{v}) 
	-\frac{\sqrt{\gamma_2}}{2}  
	\bigg[
	\DP\Big( \shat\big( \widehat{v}-\vhatStar\big)\Big) + \indikator\bigg]  \DP^{-\frac{\gamma_2+1}{2}}(\widehat{v}).
	$$
	
\end{corollary}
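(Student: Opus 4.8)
The plan is to derive the entire corollary inside the Haar-type symbol calculus, in which \eqref{Commutativity} simultaneously diagonalizes every stochastic Galerkin matrix by the fixed matrix $\VPhaar$, so that Galerkin products and powers become entrywise operations on the eigenvalue matrices. First I would rewrite the piecewise pressure in \eqref{LeFloch} as the single closed expression
$$
p(v)
=
-\tfrac{1}{2}\big(\sign(v-v_*)-1\big)\,v^{-\gamma_1}
+
\tfrac{1}{2}\big(\sign(v-v_*)+1\big)\big(v^{-\gamma_2}+\Delta v_*\big),
$$
which reproduces both branches for $v\neq v_*$, since the two prefactors act as indicators of $\{v<v_*\}$ and $\{v>v_*\}$. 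Projecting this identity and substituting the gPC modes $\shat(\widehat{v}-\vhatStar)$, $\widehat{\,v^{-\gamma_i}}$ and $\DvhatStar$ from Theorem~\ref{LemmaHaar} for $\sign(v-v_*)$, $v^{-\gamma_i}$ and $\Delta v_*$, with products taken as Galerkin products, reproduces verbatim the stated $\widehat{p}(\widehat{v})$; its consistency in $\lVert\cdot\rVert_{\mathbb{P}}$ follows from the convergence of each building block in Theorem~\ref{LemmaHaar} together with the consistency of the Galerkin product.

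Because $\widehat{v}\in\AdmissibleSet^+$ forces $\DP(\widehat{v})>0$, the power modes $\widehat{\,v^{-\gamma_i}}=\VPhaar\DP(\widehat{v})^{-\gamma_i}\VPhaar^{\T}\UNIT$ are well defined for the negative exponents by the real-exponent extension discussed after Lemma~\ref{Lemma2}, and the Jacobian formula of Theorem~\ref{LemmaHaar}(i) extends to give $\D_{\widehat{v}}\widehat{\,v^{-\gamma_i}}=-\gamma_i\VPhaar\DP(\widehat{v})^{-\gamma_i-1}\VPhaar^{\T}$. Writing $S\coloneqq\DP\big(\shat(\widehat{v}-\vhatStar)\big)$, the commutativity \eqref{Commutativity} turns $\widehat{p}(\widehat{v})$ into a $\VPhaar$-conjugation of a diagonal symbol, so differentiation reduces to the scalar product rule on the diagonal entries. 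The sign matrix $S$ is piecewise constant in the entries of $\DP(\widehat{v})$; hence, exactly as in the generalized-gradient computation behind Corollary~\ref{Corollary2}, its generalized gradient vanishes away from the jump $v=v_*$ and only the power terms (with $\DvhatStar$ constant) are differentiated. Collecting the two contributions yields
$$
\D_{\widehat{v}}\widehat{p}(\widehat{v})
=
\VPhaar\,\DP_{p'}(\widehat{v})\,\VPhaar^{\T},
\qquad
\DP_{p'}(\widehat{v})
=
\tfrac{\gamma_1}{2}(S-\indikator)\DP(\widehat{v})^{-\gamma_1-1}
-
\tfrac{\gamma_2}{2}(S+\indikator)\DP(\widehat{v})^{-\gamma_2-1},
$$
whose diagonal reproduces $p'(v)=-\gamma_i v^{-\gamma_i-1}$ on each branch $S=\mp\indikator$.

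Finally I would read off the spectrum. Since the second flux component is the $-u$ entry of \eqref{LeFloch}, the Jacobian inherits the off-diagonal companion structure
$$
\D_{\uhat}\widehat{f}(\uhat)
=
\big(\diag\{1,1\}\otimes\VPhaar\big)
\begin{pmatrix} 0 & \DP_{p'}(\widehat{v}) \\ -\indikator & 0 \end{pmatrix}
\big(\diag\{1,1\}\otimes\VPhaar\big)^{\T}.
$$
After permuting the indices this is a direct sum of $2\times2$ companion blocks whose eigenvalues satisfy $\lambda^2=-\DP_{p'}(\widehat{v})$. The monotonicity $p'(v)<0$ makes $-\DP_{p'}(\widehat{v})$ entrywise positive, so the spectrum is real and equals $\{\pm\DP_{p'}^{\nicefrac{1}{2}}(\widehat{v})\}$, where the stated $\DP_{p'}^{\nicefrac{1}{2}}(\widehat{v})$ is precisely the diagonal square root squaring to $-\DP_{p'}(\widehat{v})$, as one checks branchwise from $S=\mp\indikator$.

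I expect the main obstacle to be the non-smooth step: rigorously justifying that the generalized (Clarke) gradient of the sign modes $\shat(\widehat{v}-\vhatStar)$ contributes nothing to $\D_{\widehat{v}}\widehat{p}$, so that the product rule closes exactly as in the deterministic and level-set settings; once this is in place, the reality of the spectrum is a short consequence of $p'(v)<0$.
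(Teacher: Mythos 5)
Your proposal is correct and follows essentially the same route as the paper's own proof: rewrite the piecewise pressure via the sign function, project each building block with Theorem~\ref{LemmaHaar}, differentiate in the common Haar eigenbasis treating the sign modes as locally constant, and read the spectrum off the block companion structure. The only substantive difference is that you work with the $-\widehat{u}$ second flux component from~\eqref{LeFloch} (so that $\lambda^2=-\DP_{p'}(\widehat{v})>0$), which is in fact the consistent choice, whereas the corollary and the paper's proof as printed carry a sign slip ($+\widehat{u}$ and $+\indikator$ in the lower-left block) that would otherwise contradict the claimed real spectrum.
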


\begin{proof}
	We write the Lipschitz continuous pressure~\eqref{LeFloch} as
	$$
	p(v)=
	-
	\frac{1}{2}
	\Big[
	\sign(v-v^*) - 1 
	\Big] v^{-\gamma_1}
	+
	\frac{1}{2}
	\Big[
	\sign(v-v^*) + 1 
	\Big]\Big( v^{-\gamma_2}+\Delta v_{*} \Big).
	$$	
	Theorem~\ref{LemmaHaar} yields the projected pressure law~$\widehat{p}(\widehat{v})$. 
	Furthermore, Theorem~\ref{LemmaHaar} implies the generalized Jacobian 
	$$
	\D_{\uhat}
	\widehat{f}(\uhat)
	=
	\begin{pmatrix}
		\zeros & \D_{\widehat{v}} \widehat{p}(\widehat{v}) \\
		\indikator & \zeros
	\end{pmatrix}
	\quad\text{with}\quad
	\begin{aligned}
		\D_{\widehat{v}} \widehat{p}(\widehat{v})
		&=
		\frac{\gamma_1}{2} 
		\VPhaar 
		\DP\Big( \shat\big( \widehat{v}-\vhatStar \big) - \UNIT\Big)
		\DP^{-(\gamma_1+1)} (\widehat{v})\VPhaar^\T \\
		&-
		\frac{\gamma_2}{2} 
		\VPhaar 
		\DP\Big( \shat\big( \widehat{v}-\vhatStar \big) + \UNIT\Big)  \DP^{-(\gamma_2+1)}(\widehat{v}) \VPhaar^\T.
	\end{aligned}
	$$
	Hence, we have the eigenvalue decomposition
	\begin{align*}
		&\D_{\uhat}
		\widehat{f}(\uhat)
		=
		\widehat{T}(\widehat{v})\,
		\diag\Big\{  
		-\DP_{p'}^{\nicefrac{1}{2}}(\widehat{v})  ,\,
		\DP_{p'}^{\nicefrac{1}{2}}(\widehat{v}) 
		\Big\}
		\, \widehat{T}(\widehat{v})^{-1}\\
		&\text{for}\quad
		\widehat{T}(\widehat{v})
		\coloneqq
		\begin{pmatrix}
			\VPhaar & \\ & \VPhaar
		\end{pmatrix}
		\begin{pmatrix}
			- \DP_{p'}^{\nicefrac{1}{2}}(\widehat{v}) 
			& \DP_{p'}^{\nicefrac{1}{2}}(\widehat{v}) \\
			\indikator & \indikator
		\end{pmatrix}.
	\end{align*}
\end{proof}

\subsection{Isentropic Euler equations}

Following~\cite[Sec.~18.3]{Leveque}, we consider two-dimensional isentropic Euler equations that describe the density~$\rho$ of a gas and the mass flux~$q_i$ with respect to the $x_i$-direction.  The flux functions, describing the temporal propagation of the unknown~$\u=(\rho,q_1,q_2)^\T$, read as
\begin{equation}\label{EulerDeterministic}
	f_1(\u)
	=
	\begin{pmatrix}
		q_1 \\ \frac{q_1^2}{\rho} + p(\rho) \\ \frac{q_1 q_2}{\rho}
	\end{pmatrix}
	\quad \text{and} \quad
	f_2(\u)
	=
	\begin{pmatrix}
		q_2 \\ \frac{q_1 q_2}{\rho} \\ \frac{q_2^2}{\rho} + p(\rho) 
	\end{pmatrix}
\end{equation}
with nonlinear pressure law~$
\rho^\gamma>0$. 
It has been proposed in~\cite{S5} to use \textbf{Roe variables}~$\RoeDetI\coloneqq \sqrt{\rho}$ and $\RoeDetII_i(\u) = \nicefrac{q_i}{\RoeDetI}$ as auxiliary variables that yield the stochastic Galerkin formulations
\begin{align}
	&\widehat{f_1}(\uhat)
	=
	\begin{pmatrix}
		\widehat{q_1} \\
		\RoeII_1(\uhat) \ast \RoeII_1(\uhat)
		+
		\widehat{p}(\widehat{\rho}) \\
		\RoeII_1(\uhat) \ast \RoeII_2(\uhat)
	\end{pmatrix}
	\quad\text{and}\quad
	\widehat{f_2}(\uhat)
	=
	\begin{pmatrix}
		\widehat{q_2} \\
		\RoeII_1(\uhat) \ast \RoeII_2(\uhat) \\
		\RoeII_2(\uhat) \ast \RoeII_2(\uhat)
		+
		\widehat{p}(\widehat{\rho}) 
	\end{pmatrix} \nonumber \\
	&\text{for}\quad
	\widehat{\sqrt{\rho}\, }(\widehat{\rho})
	\coloneqq
	\argmin\limits_{\hat{\alpha} \in \AdmissibleSet^+} \Big\{ \entropy^{(2)}_{\hat{\rho}}(\hat{\alpha}) \Big\}
	\quad  \text{and}  \quad
	\RoeII_i(\uhat) 
	\coloneqq
	\P\Big(\widehat{\sqrt{\rho}\, }(\widehat{\rho}) \Big)
	\widehat{q_i}. \label{OptimizationProblemEuler}
\end{align}
Furthermore, this choice leads to a hyperbolic formulation of  one-dimensional isothermal Euler equations for any~gPC expansions~\cite{FettesPaper}. 
Here, we consider the Haar-type formulations that allow for an extension to non-polynomial pressure laws and multiple space dimensions without optimization problems.

\begin{corollary}\label{corollary4}
	Define the  variables~$
	\velocityIII(\uhat)
	\coloneqq 
	\VPhaar
	\DP_{\velocityIII}(\uhat)
	\VPhaar^\T \UNIT
	$, 
	$
	\DP_{\velocityIII}(\uhat)
	\coloneqq
	\DP(\widehat{q_i}) \DP(\widehat{\rho})^{-1}
	$ and the matrix 
	$\vec{n}\cdot\DP_{\velocity}(\uhat)
	\coloneqq 
	n_1 \DP_{\velocityI}(\uhat)
	+
	n_2 \DP_{\velocityII}(\uhat)
	$. 
	Then, 
	the projected pressure law is 
	$\widehat{p}(\widehat{\rho})
	=
	\VPhaar\DP(\widehat{\rho})^\gamma \VPhaar^\T \UNIT$ 
	and 
	a Haar-type stochastic Galerkin formulation, which is defined for all states~$\uhat\coloneqq (\widehat{\rho},\widehat{q})^\T$ satisfying $\widehat{\rho}\in\mathbb{H}^+ $, to the two-dimensional isentropic Euler equations~\eqref{EulerDeterministic} reads as
	\noindent
	\begin{align*}
		&\partial_t \widehat{\u}(t,\x)
		+
		\partial_{x_1}
		\widehat{f_1}\Big(\widehat{\u}(t,\x)\Big)
		+
		\partial_{x_2}
		\widehat{f_2}\Big(\widehat{\u}(t,\x)\Big)
		=\veco \quad \text{with flux functions} \\
		& 
		\widehat{f_1}\big(\widehat{\u}\big)
		=
		\begin{pmatrix}
			\velocityI(\uhat) \ast \widehat{\rho} \\
			\velocityI(\uhat)^{\ast 2} \ast \widehat{\rho}  + \widehat{p}(\widehat{\rho})\\
			\velocityI(\uhat)\ast\velocityII(\uhat) \ast \widehat{\rho}  \\	\end{pmatrix},\quad
		\widehat{f_2}\big(\widehat{\u}\big)
		=
		\begin{pmatrix}
			\velocityII(\uhat) \ast \widehat{\rho} \\
			\velocityI(\uhat)\ast\velocityII(\uhat) \ast \widehat{\rho}  \\
			\velocityII(\uhat)^{\ast 2} \ast \widehat{\rho}  + \widehat{p}(\widehat{\rho})\\	
		\end{pmatrix} \\
		&\text{and real spectrum}\quad
		\sigma\Big\{
		\vec{n} \cdot
		\D_{\uhat} \widehat{f}\big(\widehat{\u},\v\big) 
		\Big\}
		=
		\Big\{\
		\vec{n}\cdot\DP_{\velocity}(\uhat)
		\pm
		\sqrt{\gamma}\, \DP^{\frac{\gamma-1}{2}}( \widehat{\rho} ), \
		\vec{n}\cdot\DP_{\velocity}(\uhat) \ 
		\Big\}.
	\end{align*} 
\end{corollary}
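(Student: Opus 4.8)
The plan is to carry out every Galerkin operation in the common Haar eigenbasis $\VPhaar$, where---by the constant-eigenvector property underlying the commutativity~\eqref{Commutativity}---each Galerkin product degenerates to an ordinary product of diagonal eigenvalue matrices. Throughout I abbreviate $R\coloneqq\DP(\widehat{\rho})$, $Q_i\coloneqq\DP(\widehat{q_i})$ and $N_i\coloneqq\DPv(\uhat)=Q_iR^{-1}$. The hypothesis $\widehat{\rho}\in\mathbb{H}^+$ means that $R$ is strictly positive definite, so that $R^{-1}$, $R^{\gamma}$ and $R^{\frac{\gamma-1}{2}}$ are well defined and the auxiliary velocities $\velocityIII(\uhat)=\VPhaar\DPv(\uhat)\VPhaar^\T\UNIT$ exist for every admissible state; this is exactly the domain assertion of the corollary.

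First I would verify that the stated flux functions are the consistent Galerkin projections of~\eqref{EulerDeterministic}. Using~\eqref{Commutativity} together with Lemma~\ref{Lemma2}, every Galerkin product occurring in $\widehat{f_1}$ and $\widehat{f_2}$ collapses to the form $\VPhaar(\cdot)\VPhaar^\T\UNIT$ with a diagonal inner factor. For instance $\velocityI(\uhat)\ast\widehat{\rho}=\VPhaar N_1R\VPhaar^\T\UNIT=\VPhaar Q_1\VPhaar^\T\UNIT=\widehat{q_1}$, while $\velocityI^{\ast 2}\ast\widehat{\rho}$ and $\velocityI\ast\velocityII\ast\widehat{\rho}$ produce the eigenvalue matrices $N_1^2R=Q_1^2R^{-1}$ and $N_1N_2R=Q_1Q_2R^{-1}$, which are precisely the projections of $q_1^2/\rho$ and $q_1q_2/\rho$. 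The pressure mode $\widehat{p}(\widehat{\rho})=\VPhaar R^{\gamma}\VPhaar^\T\UNIT$ is supplied directly by Theorem~\ref{LemmaHaar}(i).

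Next I would compute the directional Jacobian $\J\coloneqq\vec{n}\cdot\D_{\uhat}\widehat{f}=n_1\D_{\uhat}\widehat{f_1}+n_2\D_{\uhat}\widehat{f_2}$ blockwise. The only tools required are the identity $\D_{\uhat}\DP(\uhat)\VPhaar^\T\UNIT=\VPhaar^\T$ obtained in the proof of Theorem~\ref{LemmaHaar}(i) and the pressure Jacobian $\D_{\widehat{\rho}}\widehat{p}=\gamma\VPhaar R^{\gamma-1}\VPhaar^\T$. Because all matrices share the constant eigenvectors $\VPhaar$, the chain rule acts entrywise on the eigenvalues and every block of $\J$ takes the form $\VPhaar(\text{diagonal})\VPhaar^\T$; for example $\D_{\widehat{\rho}}\widehat{f_1}^{(2)}=\VPhaar(-N_1^2+\gamma R^{\gamma-1})\VPhaar^\T$ and $\D_{\widehat{q_1}}\widehat{f_1}^{(2)}=\VPhaar(2N_1)\VPhaar^\T$, mirroring the deterministic entries $-\nu_1^2+p'(\rho)$ and $2\nu_1$. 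Conjugating by the orthogonal matrix $\widehat{\VPhaar}\coloneqq\diag\{1,1,1\}\otimes\VPhaar$ and reindexing, $\widehat{\VPhaar}^\T\J\widehat{\VPhaar}$ decouples into $K+1$ independent $3\times 3$ blocks, the $k$-th of which coincides with the deterministic Jacobian $n_1\D_{\u}f_1+n_2\D_{\u}f_2$ of~\eqref{EulerDeterministic} evaluated at $\rho=(R)_{kk}$, $\nu_1=(N_1)_{kk}$, $\nu_2=(N_2)_{kk}$. Since that matrix is diagonalizable with the real eigenvalues $\vec{n}\cdot\velocity\pm\sqrt{p'(\rho)}$ and $\vec{n}\cdot\velocity$, taking the union over $k$ yields exactly the claimed spectrum $\vec{n}\cdot\DP_{\velocity}(\uhat)\pm\sqrt{\gamma}\,\DP^{\frac{\gamma-1}{2}}(\widehat{\rho})$ and $\vec{n}\cdot\DP_{\velocity}(\uhat)$, all real; an explicit eigenvector matrix of Roe type can be assembled blockwise if desired.

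I expect the main obstacle to be the bookkeeping for the mixed term $\velocityI\ast\velocityII\ast\widehat{\rho}$ and its derivatives, where the dependence on $\uhat$ enters simultaneously through both auxiliary velocities and through the density, and to confirm that conjugation by $\widehat{\VPhaar}$ produces the deterministic block structure \emph{exactly} rather than merely a similar one. The decisive point that makes all of this routine is the Haar-type constant-eigenvector property, which forces every $\P$-matrix to commute and thereby reduces the whole stochastic computation to the scalar calculus already carried out in the deterministic isentropic Euler analysis.
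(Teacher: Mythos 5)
Your proposal is correct and follows essentially the same route as the paper's proof: compute the auxiliary velocities and all Galerkin products as $\VPhaar(\cdot)\VPhaar^\T$ with diagonal inner factors via Theorem~\ref{LemmaHaar} and Lemma~\ref{Lemma0}, conjugate the directional Jacobian by $\widehat{\VPhaar}=\diag\{\VPhaar,\VPhaar,\VPhaar\}$, and read off the real spectrum from the resulting block structure of commuting diagonal matrices. Your extra remark that reindexing decouples the conjugated Jacobian into $K+1$ copies of the deterministic Jacobian evaluated at the eigenvalue triples $\big((R)_{kk},(N_1)_{kk},(N_2)_{kk}\big)$ is a slightly more explicit phrasing of the paper's closing observation that ``the sparse block diagonal structure'' yields the real spectrum, not a different argument.
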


\begin{proof}
	The solution to the optimization problem is obtained by Theorem~\ref{LemmaHaar}, i.e.
	$$
	\RoeII_i(\uhat) 
	=
	\VPhaar \DP( \widehat{q_i}) \DP(\widehat{\rho})^{-\nicefrac{1}{2}}  
	\VPhaar^\T \UNIT
	\quad\text{and}\quad
	\RoeII_i(\uhat) 
	\ast 
	\RoeII_j(\uhat) 
	=
	\VPhaar
	\DP( \widehat{q_i})
	\DP( \widehat{q_j})
	\DP( \widehat{\rho})^{-1}
	\VPhaar^\T 
	\UNIT.
	$$
	Lemma~\ref{Lemma0} yields~$\D_{\widehat{\rho}}
	\big[
	\DP( \widehat{\rho})
	\VP^\T 
	\UNIT
	\big] = \indikator  $. Hence, we have the Jacobian
	$$
	\D_{\widehat{\rho}}
	\Big[
	\RoeII_i(\uhat) 
	\ast 
	\RoeII_j(\uhat) 
	\Big]
	=
	\VPhaar
	\DP( \widehat{q_i})
	\DP( \widehat{q_j})
	\D_{\widehat{\rho}}
	\Big[
	\DP( \widehat{\rho})^{-1}
	\VPhaar^\T 
	\UNIT
	\Big]
	=
	-
	\VPhaar
	\DP_{\velocityI}(\uhat)
	\DP_{\velocityII}(\uhat)
	\VPhaar^\T.
	$$
	Due to Theorem~\ref{LemmaHaar}, we have the expression~$
	\D_{\hat{\rho}}
	\widehat{p}(\widehat{\rho})
	=
	\VPhaar 
	\big[ \gamma \DP(\widehat{\rho})^{\gamma-1}\big]
	\VPhaar^\T
	$ and the Jacobian of the flux function reads as
	\begin{align*}
		&\vec{n} \cdot \D_{\uhat} \widehat{f}(\uhat) 
		=
		n_1 \D_{\uhat} \widehat{f_{1}}(\uhat) 
		+
		n_2 \D_{\uhat} \widehat{f_{2}}(\uhat) 
		\quad\text{for}\quad
		\widehat{\VPhaar} \coloneqq \diag\big\{ \VPhaar,\VPhaar,\VPhaar\big\}
		\quad\text{and}\\
		&\D_{\uhat} \widehat{f_{1}}(\uhat) 
		=
		\widehat{\VPhaar} 
		\begin{pmatrix}
			\zeros & \indikator & \zeros \\
			\gamma \DP(\widehat{\rho})^{\gamma-1} - \DP_{\velocityI}(\uhat)^2 & 2\DP_{\velocityI}(\uhat) & \zeros \\
			-\DP_{\velocityI}(\uhat)\DP_{\velocityII}(\uhat) & \DP_{\velocityII}(\uhat) & \DP_{\velocityI}(\uhat)
		\end{pmatrix} \widehat{\VPhaar}^\T, \\
		&\D_{\uhat} \widehat{f_{2}}(\uhat) 
		=
		\widehat{\VPhaar} 
		\begin{pmatrix}
			\zeros & \zeros & \indikator \\
			-\DP_{\velocityI}(\uhat)\DP_{\velocityII}(\uhat) & \DP_{\velocityII}(\uhat) & \DP_{\velocityI}(\uhat) \\
			\gamma \DP(\widehat{\rho})^{\gamma-1} -\DP_{\velocityII}(\uhat)^2 & \zeros & 2\DP_{\velocityII}(\uhat) \\
		\end{pmatrix} 
		\widehat{\VPhaar}^\T.
	\end{align*}
	
	\noindent
	Due to the sparse block diagonal structure, the real spectrum is obtained. 
	
\end{proof}

\noindent

\noindent
Finally, we remark that all involved matrices can be exactly precomputed by using an appropriate quadrature rule with a finite number of   nodes~$\xi^{(1)},\ldots,\xi^{(Q)}$. 
According to~\cite[Sec.~5.1]{FettesPaper}, the positivity of the gPC expansions $\Pi_K\big[\widehat{\rho}\big]\big(\xi^{(q)}\big)>0$ at  all points~$q=1,\dots,Q$ implies~$\widehat{\rho}\in\mathbb{H}^+$, which guarantees hyperbolicity of the stochastic Galerkin formulation in Theorem~\ref{corollary4}.  Hence, the
connection between the hyperbolicity of the Galerkin and the original system is established.

\section{Numerical results}\label{Numerics}
\noindent
All numerical experiments are performed with a globally third order scheme applied on the deterministic Galerkin system. We construct the numerical scheme applying the method of lines and the local Lax-Friedrichs flux. The scheme employs the classical third order strong stability preserving (SSP) Runge-Kutta (RK) method with three stages~\cite{Jiang1996} for the time discretization and third-order CWENO reconstructions for the high-order spatial discretization. In particular, for the one-dimensional problems we consider the CWENO method of~\cite{Visconti2018}, whereas for the two-dimensional problems we use the truly 2D reconstruction described in~\cite{SempliceCocoRusso,CastroSemplice2019} which avoids dimensional splitting. 
The use of CWENO reconstructions is also suited for the high-order numerical treatment of balance laws where the source terms are integrated with a Gaussian quadrature formula matching the order of the scheme. The scheme is implemented in the finite volume solver \emph{SteFVi}~\cite{gerster_stephan_2022_5870686}. The computational grid is set up with uniform cells and corresponding ghost cells taking into account the boundary conditions. The CFL number is always set to $0.45$. 

\begin{remark}
	The flux functions of the derived systems are not smooth and eigenvalues may coincide. Hence, the systems are not in the common strictly hyperbolic setting and  non-classical wave phenomena occur~\cite{Philipp}. Thus, numerical solvers usually based on approximations of solutions to Riemann problems might fail to recover non-classical wave patterns. However, it has been  shown in~\cite{muller2006riemann} that most  solvers are able to correctly approximate solutions to non-classical hyperbolic systems of PDEs provided that the time step is small enough.
	
	The subdifferential of the absolute value at zero is the whole interval~$[-1,1]$ and  the generalized spectra derived in Section~\ref{SectionConservationLaws}  enter the local Lax-Friedrichs flux as the maximum of the characteristic speeds. This ensures sufficient numerical viscosity and, as our computations show, the SSPRK-CWENO schemes approximate  the non-classical waves correctly.

	We recall that the positive definiteness assumption~\eqref{Definitionsbereich} holds if all realizations are positive. In the following numerical experiments for the gas dynamics described in Corollary~\ref{Corollary3} and~\ref{corollary4}, initial values are chosen away from vacuum states such that the discretization does not violate this property. 		
\end{remark}

\subsection{Lipschitz continuous flux function}
We illustrate the theoretical results that are based on Haar-type expansions by means of the random initial value problem~\eqref{Oleinik} in Subsection~\ref{SectionCorollary1}. 
In particular, we consider the wavelet systems~$\mathbb{W}\big[\VPhaar_{\cos}\big]$ and~$\mathbb{W}\big[\VPhaar_J\big]$ that are generated by the discrete cosine and Haar transform. Initial values for the stochastic Galerkin formulation presented in Corollary~\ref{Corollary1} are obtained by the orthogonal projection of the random, space-dependent signum function~$ \sign\left(\widehat{x}\big(x,\xi(\omega)\big)\right)$. The first mode, i.e.~the mean~$
\left\langle \sign\big(\widehat{x}(x,\xi)\big) , \phi_0(\xi) \right\rangle_{\mathbb{P}}
=
\mathbb{E}\left[\sign\big(\widehat{x}(x,\xi)\big)\right]
$, yields the scaling function that is illustrated in Figure~\ref{Example1_Fig1} as black line with the scale shown at the left axis. The detail functions are obtained by the projections~$
\left\langle\sign\big(\widehat{x}(x,\xi)\big) , \phi_k(\xi) \right\rangle_{\mathbb{P}}
$ for $k\geq 1$ and shown at the right axis. 
We observe that the details are ``smoother'' if the cosine transform (left panel) is used in comparison to the Haar transform~(right panel).

The obtained scaling and detail functions define the approximation~$\Pi_K\big[\uhat(0,x)\big](\xi) $ by the 
wavelet system~\eqref{WaveletSystem}, which is shown in Figure~\ref{Example1_Fig2}. 
The realizations are stated by the colorbar.  
In comparison to Figure~\ref{Example1_Fig1}, where~the realizations~$\sign\big(\widehat{x}\big(x,\xi(\omega)\big)\big)\in\{-1,1\}$ define two areas, which are separated by a straight line given by the scaling function, a piecewise approximation is observed in Figure~\ref{Example1_Fig2}, which results from a compression of input data. 

\begin{figure}[H]
	\begin{minipage}{0.49\textwidth}
		\begin{center}	
			\textbf{cosine transform}
			
			\scalebox{1}{\includegraphics[width=\linewidth]{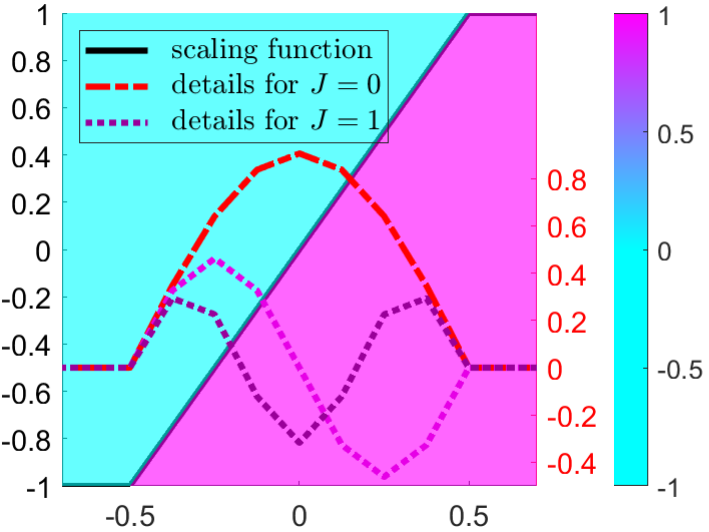}}
			
		\end{center}	
	\end{minipage}
	\hfil
	\begin{minipage}{0.49\textwidth}
		\begin{center}	
			\textbf{Haar transform}
			
			\scalebox{1}{\includegraphics[width=\linewidth]{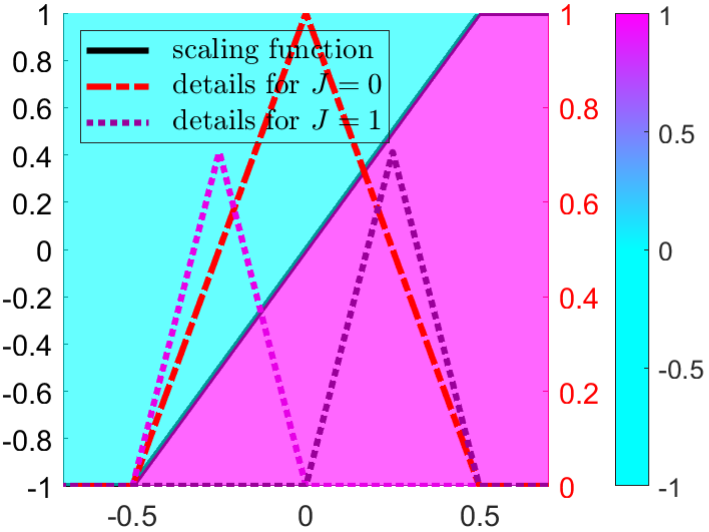}}
			
		\end{center}	
	\end{minipage}
	
	\caption{Projection of random initial values~$
		\left\langle\sign\big(\widehat{x}(x,\xi)\big) , \phi_k(\xi) \right\rangle_{\mathbb{P}}
		$. The left panel accounts for the cosine transform~\eqref{CosineTrafo}, the right panel for the Haar transform~\eqref{HaarTrafo}. The colorbar states realizations~$\sign\big(\widehat{x}\big(x,\xi(\omega)\big)\big)\in\{-1,1\}$. }
	\label{Example1_Fig1}
\end{figure}

\begin{figure}[H]
	\begin{minipage}{0.49\textwidth}
		\begin{center}	
			\textbf{cosine transform}
			
			\scalebox{1}{\includegraphics[width=\linewidth]{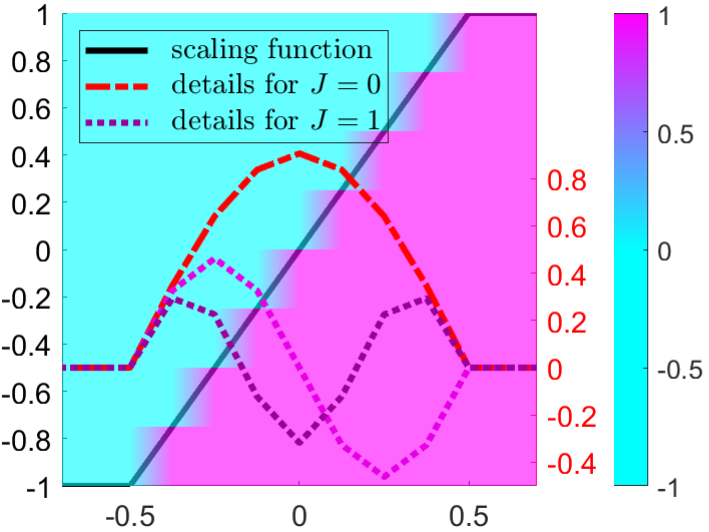}}
			
		\end{center}	
	\end{minipage}
	\hfil
	\begin{minipage}{0.49\textwidth}
		\begin{center}	
			\textbf{Haar transform}
			
			\scalebox{1}{\includegraphics[width=\linewidth]{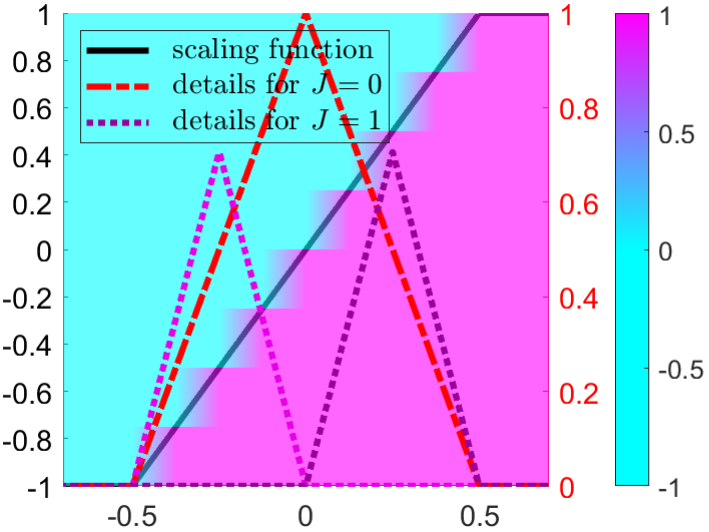}}
			
		\end{center}	
	\end{minipage}
	
	\caption{Realizations~$\Pi_K\big[\uhat(0,x)\big]\big(\xi(\omega)\big) $ that result from the cosine transform~(left) and Haar transform (right) applied to the random initial values~$\sign\big(\widehat{x}(x,\xi)\big)$.}
	\label{Example1_Fig2}
\end{figure}

\begin{figure}[H]
	\begin{minipage}{0.49\textwidth}
		\begin{center}	
			\textbf{cosine transform}
			
			\scalebox{1}{\includegraphics[width=\linewidth]{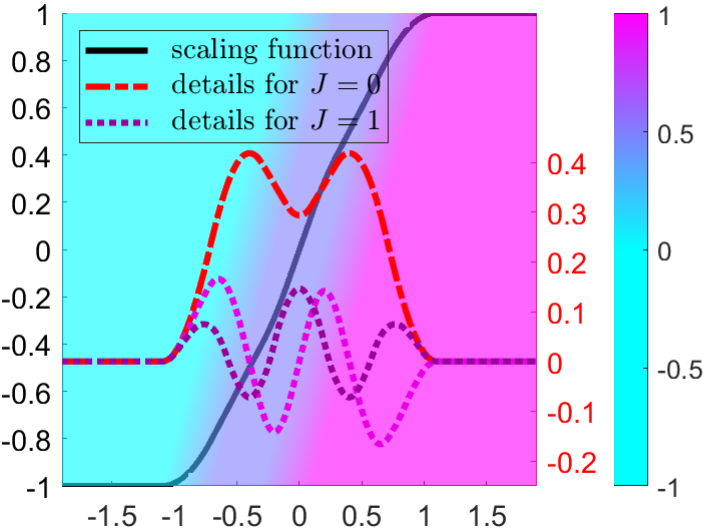}}
			
		\end{center}	
	\end{minipage}
	\hfil
	\begin{minipage}{0.49\textwidth}
		\begin{center}	
			\textbf{Haar transform}
			
			\scalebox{1}{\includegraphics[width=\linewidth]{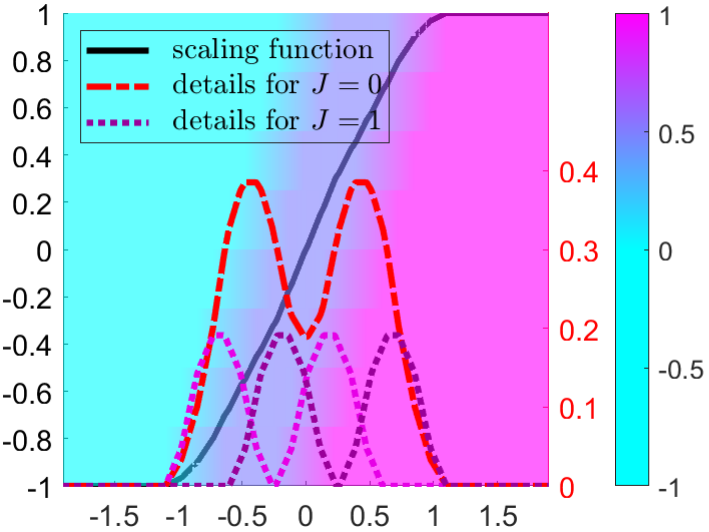}}
			
		\end{center}	
	\end{minipage}
	
	
	\vspace{6mm}
	
	\begin{minipage}{0.49\textwidth}
		\begin{center}	
			\textbf{solution in }$\boldsymbol{t=0.2}$			
			\scalebox{1}{\includegraphics[width=\linewidth]{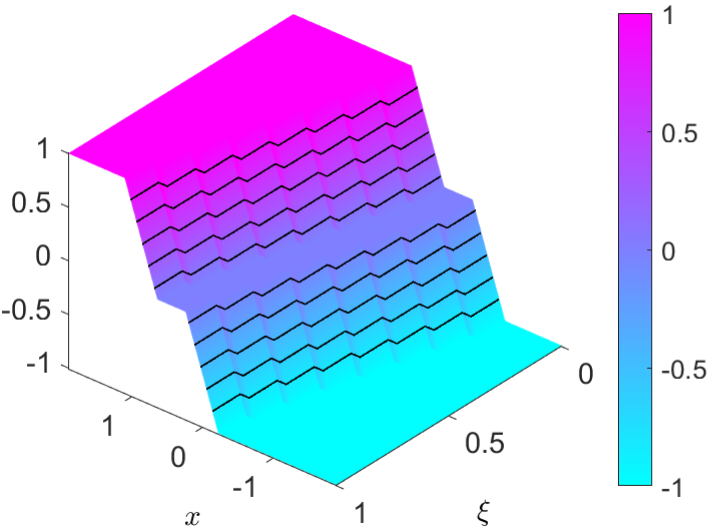}}
			
		\end{center}	
	\end{minipage}
	\hfil
	\begin{minipage}{0.49\textwidth}
		\begin{center}	
			\textbf{mean squared error}
			
			\scalebox{1}{\includegraphics[width=\linewidth]{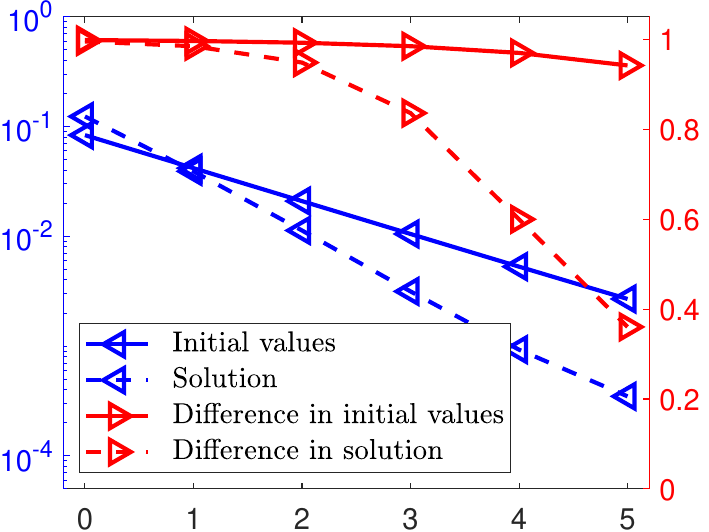}}
			
		\end{center}	
	\end{minipage}
	
	\caption{Upper panels show the realizations~$\Pi_K\big[\uhat(0.2,x)\big]\big(\xi(\omega)\big) $ at time~${t=0.2}$ that result from the cosine transform~(left) and the Haar transform (right) applied to the random initial values~$\sign\big(\widehat{x}(x,\xi)\big)$.  
		The left,  lower panel shows the obtained solution at time $t=0.2$ for the cosine transform. The right, lower panel shows the MSEs for initial values blue, solid line and the MSEs at time~${t=0.2}$ (blue, dashed line)   with respect to  the left axis for the levels $J=0,\ldots,5$. The ratios~$\textup{MSE}_{\textup{Haar} } \textup{MSE}_{\textup{cos}}^{-1}$ of the MSEs~\eqref{MSE}  between the  Haar and cosine transform are shown at the right axis.}
	\label{Example1_Fig3}
\end{figure}

The gPC modes~$\uhat(t,x)$ are propagated over time by the hyperbolic system, derived in Corollary~\ref{Corollary1}. The solution in turn defines again an approximation~$\Pi_K\big[\uhat(t,x)\big](\xi)$, which is illustrated in the upper panels of Figure~\ref{Example1_Fig3} for both the cosine and Haar transform. 
In particular, the approximated solution at time~$t=0.2$ is shown in the left, lower panel for the cosine transform with level~$J=2$. 
The mean squared error
\begin{equation}\label{MSE}
	\textup{MSE}\Big[
	\uhat,\u^{\textup{ref}}
	\Big](t)
	\coloneqq
	\int
	\mathbb{E}\bigg[
	\Big(
	\Pi_K\big[\uhat\big](t,x,\xi)
	-
	\u^{\textup{ref}}(t,x,\xi)
	\Big)^2
	\bigg]
	\d x
\end{equation}
between the approximation~$\Pi_K\big[\uhat(0.2,x)\big](\xi)$ and the reference solution~\eqref{SolutionCorollary1} 
for the Haar transform  is shown in the lower, right panel with respect to the left axis. Furthermore, the difference of the mean squared error of the solution that is obtained by the Haar and cosine transform is stated in terms of the ratio~$\textup{MSE}_{\textup{Haar} } \textup{MSE}_{\textup{cos}}^{-1}$, i.e.~the ratio between the mean squared errors~\eqref{MSE} that are obtained by the Haar and cosine transform. 
As expected, we observe a decreasing MSE for an increasing number of basis functions. In this particular example, the Haar expansion results in a lower MSE than the cosine transform.

\subsection{Level set equations}
We consider the two-dimensional level set equations~\eqref{2dLevelSet} with uncertain velocity~$v\sim\mathcal{U}[\nicefrac{1}{2},1]$. Initial values are deterministic and read as
$$\u(0,\x) 
=\begin{cases}
	\,\ \ (1,0)^\T & \text{if \ \ }
	\x\in[-2,2]^2, \\
	(-1,0)^\T & \text{else.} 
\end{cases}
$$

\begin{figure}[H]
	\begin{minipage}{0.49\textwidth}
		\begin{center}	
			\textbf{mean for intrusive formulation}
			
			\scalebox{1}{\includegraphics[width=\linewidth]{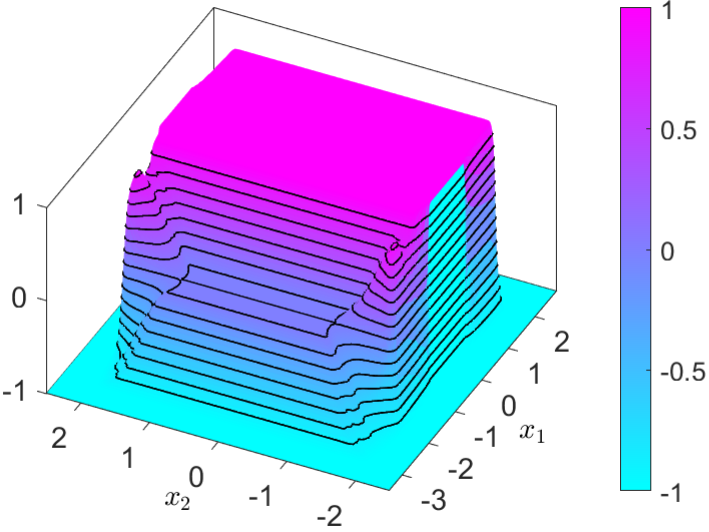}}
			
		\end{center}	
	\end{minipage}
	\hfil
	\begin{minipage}{0.49\textwidth}
		\begin{center}	
			\textbf{standard deviation}
			
			\scalebox{1}{\includegraphics[width=\linewidth]{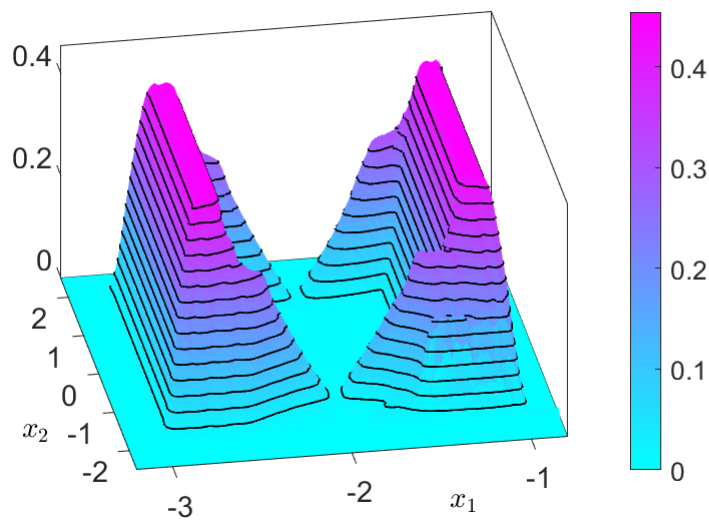}}
			
		\end{center}	
	\end{minipage}
	
	\caption{Left: Mean~$(\widehat{\,\textup{\textbf{u}}_1})_0(1,\x)$ at time~$t=1$ to the intrusive formulation in Corollary~\ref{Corollary2}. Right: Standard deviation given by~$
		\big\lVert \big(
		(\widehat{\,\textup{\textbf{u}}_1})_1, 
		\ldots,
		(\widehat{\,\textup{\textbf{u}}_1})_K
		\big)\big\rVert_2(1,\x)$.
	}
	\label{Example2_Fig1}
\end{figure}

The left panel of Figure~\ref{Example2_Fig1} 
shows the mean for the unknown~$\widehat{\,\textup{\textbf{u}}_1}$ 
and the right panel the standard deviation 
that are obtained by the intrusive formulation in Corollary~\ref{Corollary2}.

	\begin{remark}
		
		According to~\cite[Lem.~3.1]{GersterHertyCicip2020}, which is generalized in Proposition~\ref{Lemma1},  
		the  Euclidean norm~$\big\lVert \u(\xi) \big\rVert$ of a $d$-dimensional random variable~$\u\big(\xi(\omega)\big)\in\mathbb{R}^d$ can be approximately represented for arbitrary basis functions in terms of the minimization problem
		\begin{equation*}
			\widehat{ \,
				\lVert \u \rVert
				\, }
			=
			\argmin\limits_{\alphahat \in \AdmissibleSet^+} \Big\{ \entropy^{(2)}_{	\sum\limits_{i=1}^d
				\uIII^{\ast 2}
			}(\alphahat) \Big\}
			=
			\argmin\limits_{\alphahat \in \AdmissibleSet^+}
			\Bigg\{
			\frac{\hat{\alpha}^\T \P(\hat{\alpha}) \hat{\alpha} }{3}
			\ - \
			\hat{\alpha}^\T \,
			\sum\limits_{i=1}^d
			\uIII^{\ast 2}
			\Bigg\}. 
		\end{equation*}
		This motivates a stochastic Galerkin formulation for general basis functions that is of the form
		\begin{equation}
			\begin{aligned}
				&\partial_t \widehat{\u}(t,x)
				+
				\partial_{x}
				\widehat{f}\Big(\widehat{\u}(t,x),\v(t,x)\Big)
				=0
				\quad
				\text{with}
				\quad
				\widehat{f}\big(\widehat{\u},\v\big)
				=
				\v \ast
				\widehat{ \,
					\lVert \u \rVert
					\, } \\
				& \text{and Jacobian}\quad
				\D_{\widehat{\u}}
				\widehat{f}\big(\widehat{\u},\v\big)
				=
				\P\big(\v\big)
				\P\left(
				\widehat{ \,
					\lVert \u \rVert
					\, }
				\right)^{-1}
				\P\big( \widehat{\u} \big)
				\label{LevelSetNonHyperbolic}
			\end{aligned}
		\end{equation}
		for constant velocities 
		in one spatial dimension. 
		Indeed, the system~\eqref{LevelSetNonHyperbolic} is equivalent to the formulation presented in Corollary~2 provided that Haar-type expansions are used, as explained in Section~\ref{SectionChallenges}. However, it has been shown in~\cite{LevelSet} that for general basis function the system~\eqref{LevelSetNonHyperbolic} is  \emph{not} hyperbolic.
		This loss of hyperbolicity can be circumvented by considering the capacity form
		\begin{equation*}
			\begin{aligned}
				&\partial_t \Big[\P\big(\v(t,x)\big) \widehat{\u}(t,x) \Big]
				+
				\partial_{x}
				\widetilde{f}\Big(\widehat{\u}(t,x)\Big)
				=0
				\quad
				\text{with flux function}
				\quad
				\widetilde{f}\big(\widehat{\u}\big)
				=
				\widehat{ \,
					\lVert \u \rVert
					\, } \\
				& \text{and Jacobian}\quad
				\D_{\widehat{\u}}
				\widetilde{f}\big(\widehat{\u}\big)
				=
				\P\left(
				\widehat{ \,
					\lVert \u \rVert
					\, }
				\right)^{-1}
				\P\big( \widehat{\u} \big)
			\end{aligned}
		\end{equation*}
		that has real eigenvalues and a complete set of eigenvectors in the case~${
			\widehat{ \,
				\lVert \u \rVert
				\, }\in\mathbb{H}^+
		}$. We refer to~\cite{LevelSet} where a 
		capacity-form differencing scheme  based on a non-uniform effective grid~\cite[Sec.~6.16, Sec.~6.17]{Leveque} has been introduced that preserves hyperbolicity even in the multi-dimensional case and for non-constant velocities. 
		However, the drawback is the minimization problem in  Proposition~\ref{Lemma1} that is ill-posed close to the random zero-level set, which requires  additional regularization techniques~\cite[Sec.~4]{LevelSet}.
		
		Figure~\ref{Example2Comparison} compares the 
		formulation that is presented in Corollary~\ref{Corollary2} with the  hyperbolic stochastic Galerkin formulation from~\cite{LevelSet} using Legendre  polynomials in the one-dimensional case.  
		We consider again deterministic initial values and uncertainties in the velocity, more precisely
		$$\u(0,x) 
		=\begin{cases}
			-1 & \text{if \ \ }
			x<0, \\
			\ \ 1 & \text{if \ \ }
			x>0
		\end{cases}
		\qquad\text{and}\qquad
		v\sim\mathcal{U}\big[\nicefrac{1}{2},\nicefrac{3}{2}\big].
		$$
		Apart from the random zero-level set close to~$x\approx 0$ where it holds~$\u\big(t,x,\xi(\omega)\big) \approx 0$, 
		we observe good agreement between the reference solution (red line) and the stochastic Galerkin formulations (blue dotted). 
		The fact that the optimization problem in Proposition~\ref{Lemma1} is not necessarily well-posed for Legendre polynomials becomes  noticeable close to~$x\in[-2.5,2.5]$. In contrast, the optimization problem is well-posed for Haar-type expansions and hence a good approximation quality is  observable in the right panel.	
		\begin{figure}[H]
			\begin{minipage}{0.49\textwidth}
				\begin{center}	
					\textbf{Legendre polynomials}
					
					\scalebox{1}{\includegraphics[width=\linewidth]{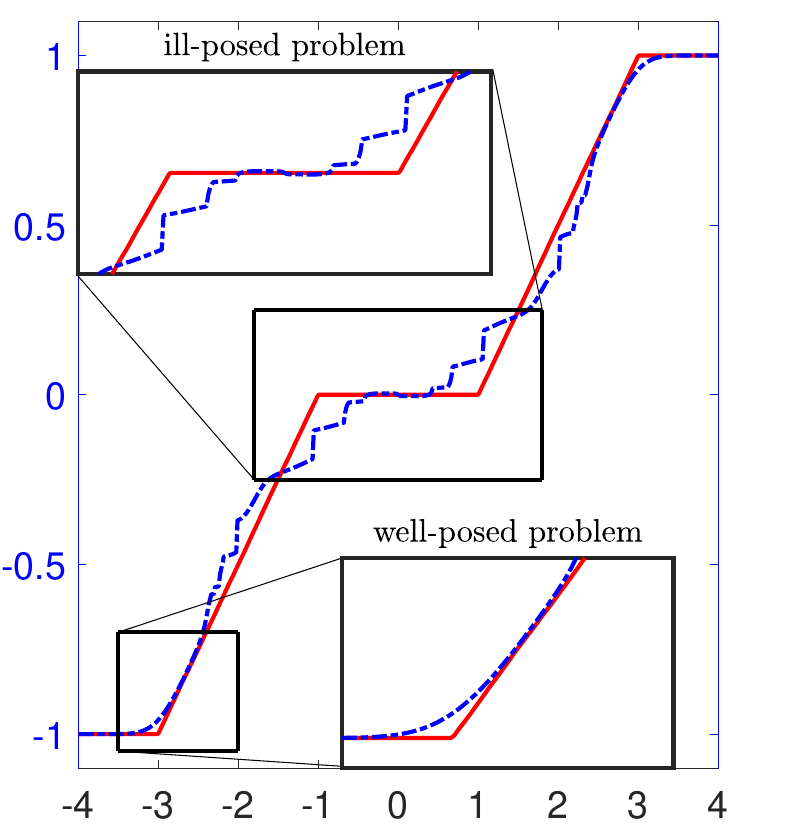}}
					
				\end{center}	
			\end{minipage}
			\hfil
			\begin{minipage}{0.49\textwidth}
				\begin{center}	
					\textbf{formulation in Corollary~\ref{Corollary2}}
					
					\scalebox{1}{\includegraphics[width=\linewidth]{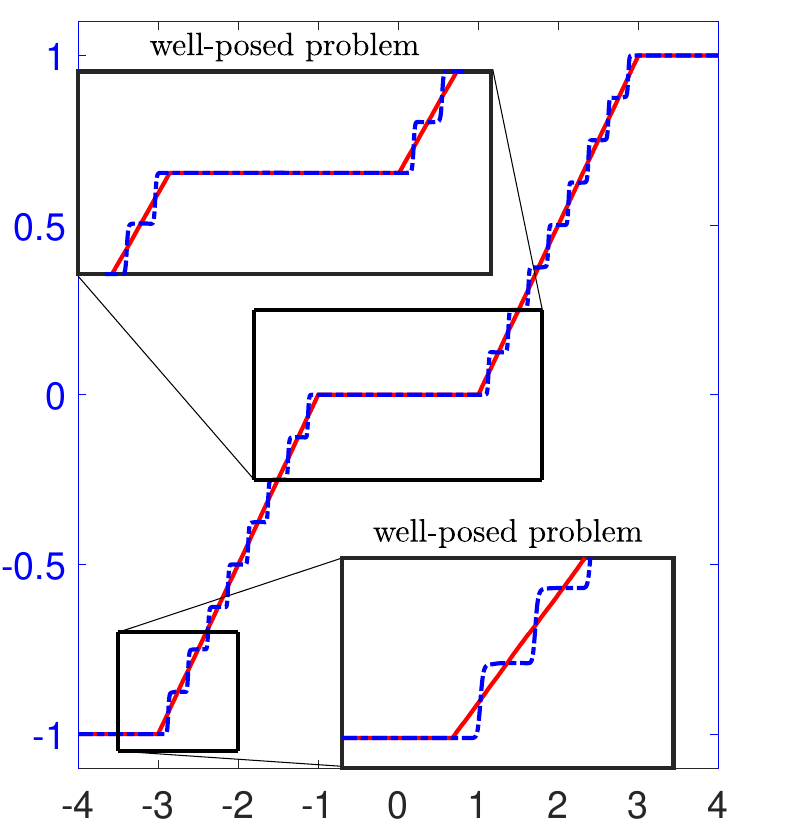}}
					
				\end{center}	
			\end{minipage}
			
			\caption{Simulation for one-dimensional level set equations in~${t=2}$ using normalized Legendre polynomials (left) and the Haar sequence with  $8$~basis functions. 
				Simulations in the left panel are obtained by the capacity-form differencing scheme from~\cite{LevelSet}, while the SSPRK-CWENO scheme is used for the Haar-type formulation. 		
				The solid red line states the mean of the random reference solution and the means that  correspond to the stochastic Galerkin systems are blue dotted. The zoom in the upper left 
				corner highlights the domain close to the random zero-level set. There, the optimization problem in Proposition~\ref{Lemma1} is ill-posed if Legendre polynomials are used, but it is well-posed for Haar-type expansions. 
				The zoom in the lower right corner shows a region where the optimization problem is well-posed.}
			\label{Example2Comparison}
		\end{figure}

	\end{remark}

\subsection{Gas flow with Lipschitz continuous pressure law}

This subsection is devoted to the \textit{p}-system with Lipschitz continuous flux function~\eqref{LeFloch} and  the corresponding stochastic Galerkin formulation that is  introduced in Corollary~\ref{Corollary3}. In particular, we consider the random pressure law
\begin{equation}\label{LeFlochRandomPressure}
	p(v,\xi)
	\coloneqq
	\begin{cases}
		v^{-\gamma_{1}} & \text{if \  } v<v_{*}(\xi), \\
		v^{-\gamma_{2}} 
		+\Delta v_{*}(\xi)
		& \text{if \  } v>v_{*}(\xi) \\
	\end{cases}
	\quad\text{with}\quad
	v_{*}(\xi)\sim \mathcal{U}[1,1.5].
\end{equation}


\begin{figure}[H]
	\begin{minipage}{0.49\textwidth}
		\begin{center}	
			\textbf{reference solution}
			
			\scalebox{1}{\includegraphics[width=\linewidth]{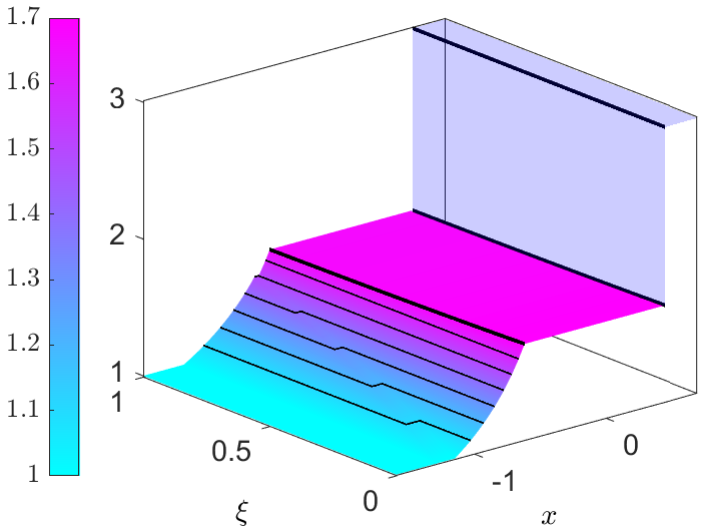}}
			
		\end{center}	
	\end{minipage}
	\hfil
	\begin{minipage}{0.49\textwidth}
		\begin{center}	
			
			\textbf{zoom on rarefaction wave}
			
			\scalebox{1}{\includegraphics[width=\linewidth]{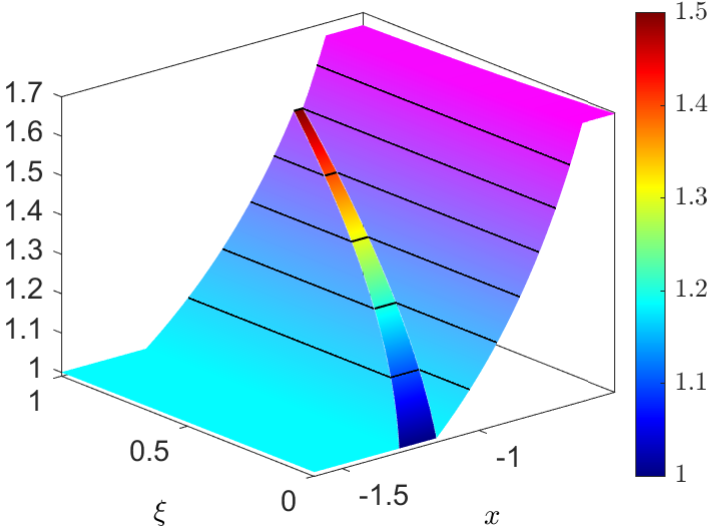}}
			
		\end{center}		
	\end{minipage}

	\vspace{6mm}

	\begin{minipage}{0.49\textwidth}
		\begin{center}	
			\textbf{gPC modes~$\boldsymbol{\widehat{v}^{(2)}(t,x)}$} 
		
		\scalebox{1}{\includegraphics[width=\linewidth]{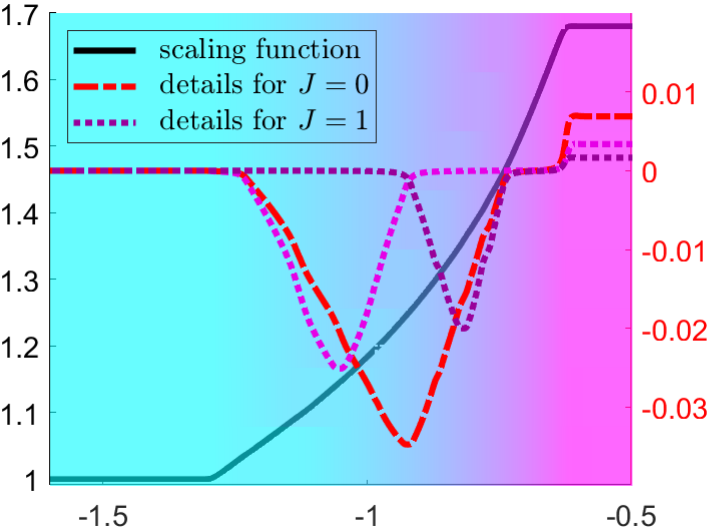}}
		
	\end{center}	
\end{minipage}
\hfil
\begin{minipage}{0.49\textwidth}
	\begin{center}	
		
		\textbf{realizations~$\boldsymbol{\Pi_J\big[ \widehat{v}^{(2)}(t,x) \big]\big(\xi(\omega) \big)}$}
		
		\scalebox{1}{\includegraphics[width=\linewidth]{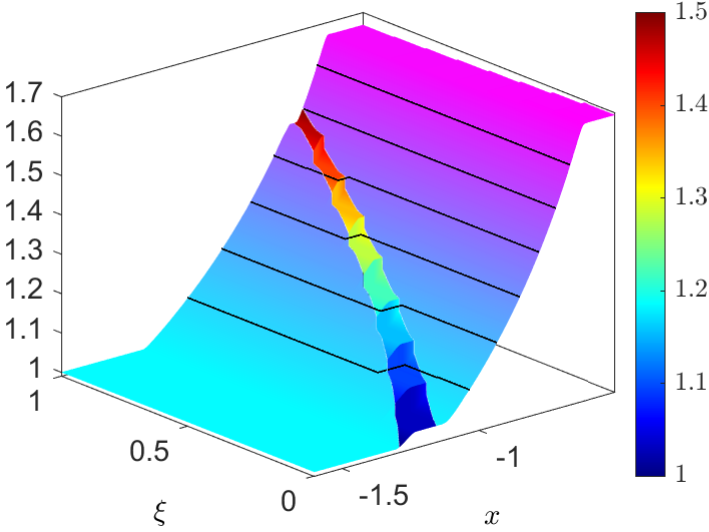}}
		
	\end{center}		
\end{minipage}

\caption{Upper panels show the reference solution to the hyperbolic system~\eqref{LeFloch} with random, Lipschitz continuous pressure law~\eqref{LeFlochRandomPressure} at time~$t=1$  with parameter~$\gamma_1=\nicefrac{5}{3}$ and $\gamma_2=\nicefrac{4}{3}$. Lower panels show the corresponding stochastic Galerkin formulation in Corollary~\ref{Corollary3} for the level~$J=2$. 
		Two different color schemes are used: The first colorbar, which is on the left side of the first subfigure and ranges from turquoise to pink, shows the values of the solution apart from the intermediate state in the rarefaction wave. This intermediate state itself is highlighted by  colorbars, which are on the right-hand side and range from blue to red.}
\label{Example3_Fig1}
\end{figure}


\begin{figure}[H]

\begin{center}	
	
	\textbf{intermediate state in rarefaction wave}
	
	\scalebox{1}{\includegraphics[width=\linewidth]{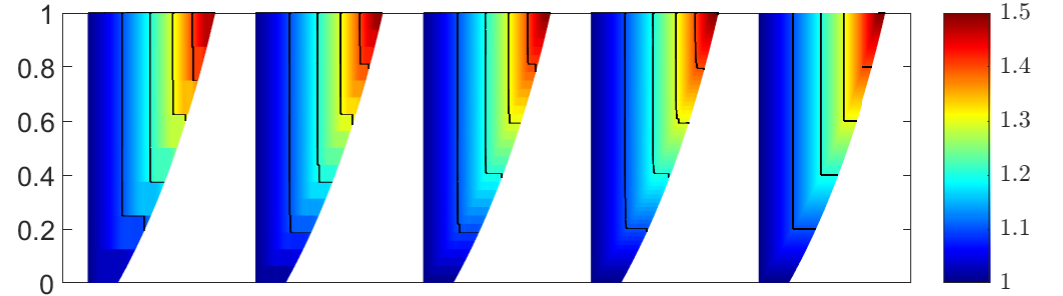}}
	
\end{center}

\hspace{8mm}
$J=2$
\hspace{7.6mm}
$J=3$
\hspace{7.6mm}
$J=4$
\hspace{7.6mm}
$J=5$
\hspace{7.6mm}
reference

\vspace{4mm}

\begin{center}	
	
	\textbf{sequence of scaled absolute differences}\vspace*{2mm}
	$\boldsymbol{
		2^{J-1} \,
		\Big\vert
		\Pi_{J} \big[ \widehat{v}^{(J)}(1,x) \big]
		-
		\Pi_{J-1}\big[ \widehat{v}^{(J-1)}(1,x) \big] 
		\Big\vert
		\big(\xi(\omega) \big)}$	
	
	\scalebox{1}{\includegraphics[width=\linewidth]{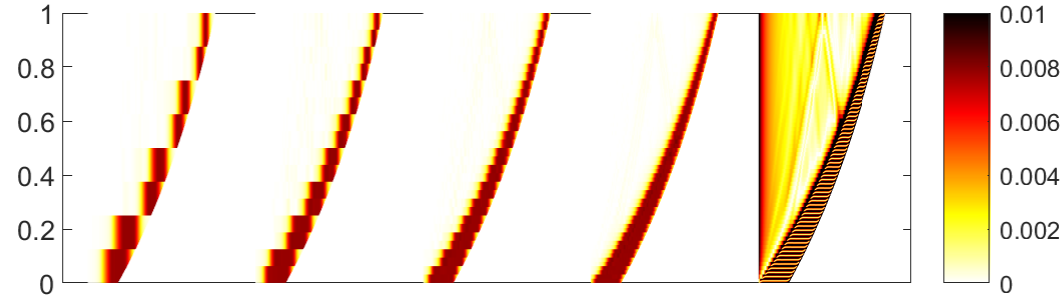}}
	
\end{center}

\hspace{8mm}
$J=2$
\hspace{7.6mm}
$J=3$
\hspace{7.6mm}
$J=4$
\hspace{7.6mm}
$J=5$
\hspace{7.6mm}
reference

\vspace{4mm}

\begin{center}	
	
	\textbf{absolute error to reference solution}\vspace*{2mm}
	$\boldsymbol{
		\Big\vert
		\Pi_{J} \big[ \widehat{v}^{(J)}(1,x) \big]
		-
		v^{\textup{(ref)}} (1,x) 
		\Big\vert
		\big(\xi(\omega) \big)}$

	\scalebox{1}{\includegraphics[width=\linewidth]{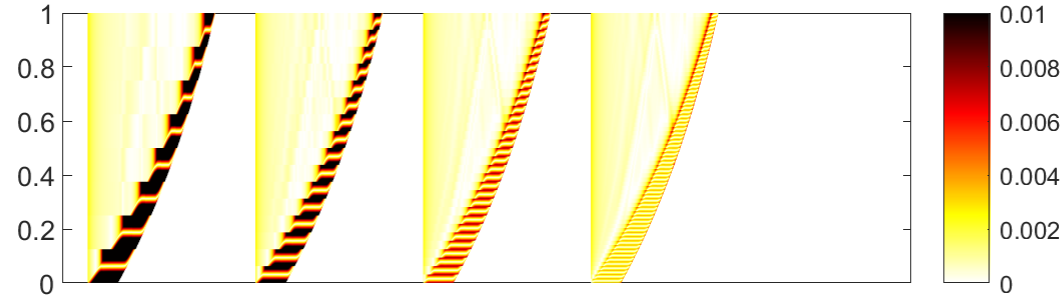}}

\end{center}

\hspace{8mm}
$J=2$
\hspace{7.6mm}
$J=3$
\hspace{7.6mm}
$J=4$
\hspace{7.6mm}
$J=5$

\vspace{4mm}

\caption{The upper panel shows a zoom on the intermediate state in the rarefaction wave for the levels~$J=2,\ldots,5$. Here, the stochastic Galerkin formulation of~Corollary~\ref{Corollary3}  and the cosine transform are used. The reference solution is given in~\cite[Sec.~5]{LipschitzFlux2001}.
	The middle  panel shows the absolute error of the stochastic Galerkin approximation between different levels that is scaled by the factor~$2^J$. 
	The lower panels states the absolute error to the reference solution.
}

\label{Example3_Fig2}

\end{figure}

\newpage
Deterministic initial values  are chosen  as~$\u(0,x)=(1,0)^\T$ 
for ${x<0}$ and~${\u(0,x) =(3,0)^\T}$ for ${x>0}$, respectively. As in the previous subsections, we consider the wavelet system~$\mathbb{W}\big[\VPhaar_{\cos}\big]$, which is generated by the discrete cosine transform, and we denote the gPC modes of the relative volume by~$\widehat{v}^{(J)}$, where the level~$J$ determines the cardinality~$\big\vert\widehat{v}^{(J)} \big\vert = 2^{J+1}$. 
The upper panels of Figure~\ref{Example3_Fig1} show the exact solution~of the relative volume~$v^{\textup{(ref)}}(t,x,\xi)$ that has been derived by Correia, Le~Floch and Thanh in~\cite[Sec.~5]{LipschitzFlux2001}. A rarefaction wave is connected by an intermediate state (purble) to a shock (transparent). More precisely, there is an intermediate constant state in the rarefaction wave, which results from a
discontinuity in the characteristic speeds. 
This constant state is highlighted in the right, upper panel  of Figure~\ref{Example3_Fig1}.

The lower, left panel shows the gPC modes~$\widehat{v}^{(2)}(1,x)$ at time~$t=1$ that are obtained by solving the intrusive system stated in Corollary~\ref{Corollary3} with level~$J=2$. The first mode~$\widehat{v}_0$, describing the mean of the random solution, is plotted with respect to the left axis. The details determine the variance and are stated at the right axis. 
The lower, right panel shows the realizations~$\Pi_K\big[ \widehat{v}^{(2)}(t,x) \big]\big(\xi(\omega) \big)$.  In particular, a stepwise approximation of the intermediate state in the rarefaction wave is observed. 


Figure~\ref{Example3_Fig2} is devoted to a convergence analysis for increasing levels~${J=2,\ldots,5}$. Therein, 
a  zoom in the area of the rarefaction wave is shown that arises from the right panels of Figure~\ref{Example3_Fig1}  and from a view at  the~$(x,\xi)$-plain. 
It  reveals the regularity and the accuracy of the stochastic Galerkin formulation in comparison to the reference solution, which is given on the right in the upper panel. The middle panel shows the absolute differences between the gPC approximation between the levels, which indicates the gain in accuaracy for increasing levels. This gain is weighted by the factor~$\sim 2^{J}$. Since this sequence of scaled absolute differences remains in the same magnitude, which is stated by the colorbar, the solution converges with the rate~$\nicefrac{1}{2}$. In particular, the simulation on the right in the middle panel states the absolute difference between the gPC approximation with level~$J=5$ and the reference solution, which is scaled by the factor~$2^5$. Therefore,  the approximation error is for the finest level in the magnitude~$0.01\cdot 2^{-5}$, as the right colorbar shows.  
The corresponding absolute error between the Galerkin formulations   and the reference solution is shown in the lower panel.

\subsection{Isentropic Euler equations}

The two-dimensional system~\eqref{EulerDeterministic} is considered with the choice~$\gamma=\nicefrac{4}{3}$. We use the stochastic Galerkin formulation that is derived in Corollary~\ref{corollary4} with the random initial values
$$\u(0,\x,\xi) 
=\begin{cases}
	\big(\rho_0(\xi),0\big)^\T & \text{for \ \ }
	\x\in[-1,1]^2 , \\
	\big(1,0\big)^\T & \text{else} 
\end{cases}
\qquad
\text{and}
\qquad
\rho_0(\xi)\sim\mathcal{U}[2,3].
$$
The left panel of Figure~\ref{Example4_Fig1} shows the mean of the random density. The  profile for $x_2=0$ is highlighted by a green line and analyzed in the right panel. Therein, a reference solution (blue, dotted) is shown that is obtained by a Monte Carlo simulation, where the solution to the random Riemann problem is exactly given in~\cite[Sec.~4]{Toro}. Furthermore, the bounds of the confidence region, where all exact random solutions occur, are shown as black lines. The grey region is the confidence region that is obtained by the intrusive formulation in~Corollary~\ref{corollary4}.

\begin{figure}[H]
	\begin{minipage}{0.49\textwidth}
		\begin{center}	
			\textbf{mean for intrusive formulation}
			
			\scalebox{1}{\includegraphics[width=\linewidth]{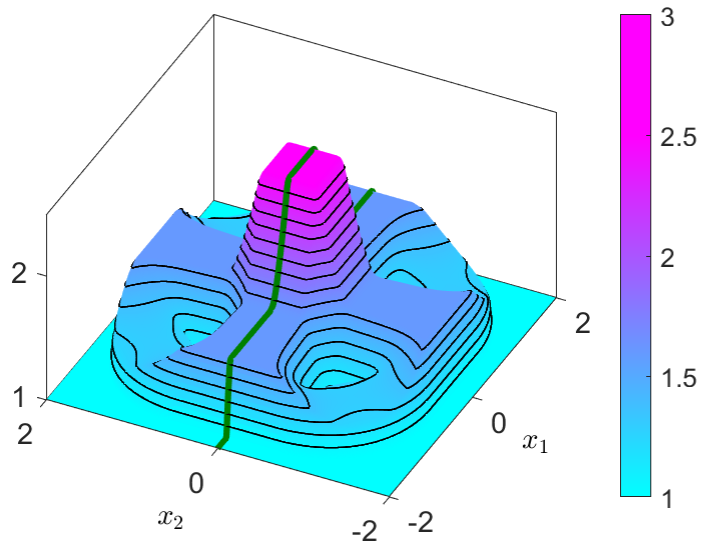}}
			
		\end{center}	
	\end{minipage}
	\hfil
	\begin{minipage}{0.49\textwidth}
		\begin{center}	
			\textbf{profile for~}$\boldsymbol{x_2=0}$
			
			\scalebox{1}{\includegraphics[width=\linewidth]{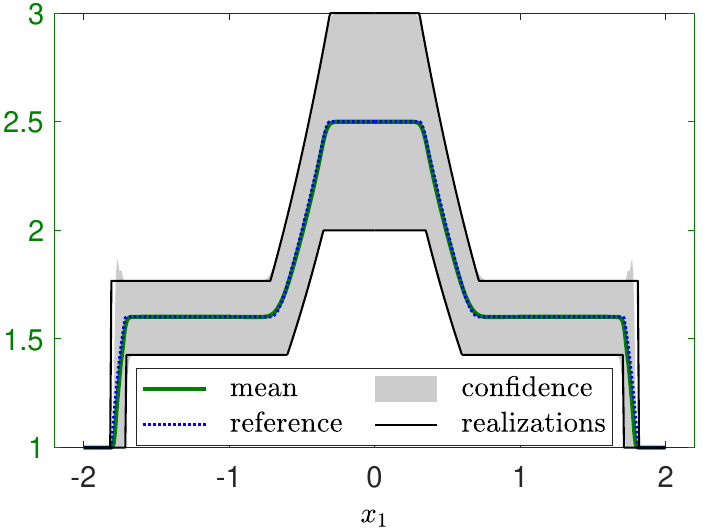}}
			
		\end{center}	
	\end{minipage}
	
	\caption{Left: Mean~$\widehat{\rho}_0(0.5,x)$ at time~$t=0.5$ to the intrusive formulation in Corollary~\ref{corollary4}. Right: Reference solution to the mean, which is illustrated   by the green profile in the left panel. }
	\label{Example4_Fig1}
\end{figure}

\section{Summary}
Stochastic Galerkin formulations have been investigated that are generated by Haar-type matrices. Those matrices arise in a number of  classical transforms, including the Haar and cosine transform, and generate a wavelet system, which states the functional dependence on a random state. This allows to express also non-polynomial quantities. In particular,  stochastic Galerkin formulations for  higher moments, the signum function and norms have been established. 

Furthermore, theoretical and numerical challenges are discussed that occur when classical polynomials are used in the polynomial chaos expansions. So far, it has been a major restriction in many applications to hyperbolic systems, when stochastic Galerkin projections not preserve deterministic  wellposedness results unless numerically expensive transforms are introduced. 
In contrast, the presented Haar-type formulations are stated in closed form and do not require any additional transforms. 
This allows for applications to a wide class of hyperbolic conservation laws. In particular, eigenvalue decompositions for several hyperbolic systems, including multi-dimensional level set equations and isentropic Euler equations with Lipschitz continuous flux, have been analyzed. 
The theoretical findings have been also numerically investigated by integrating the Galerkin system with globally third order one-dimensional and truly two-dimensional schemes which are based on CWENO reconstructions.

\section*{Appendix}

Stochastic Galerkin matrices enjoy for arbitrary basis functions  the 
properties~$\partial_{\widehat{\u}_j} \P(\widehat{\u}) = \M_j$ and~$\P(\widehat{\u})=\big[ \M_0 \widehat{\u} \big\vert \cdots \big\vert \M_K \widehat{\u} \big]$. Then, the  proof of Lemma~\ref{Lemma0} follows with standard algebraic arguments. 
\begin{proof}[{Proof of Lemma~\ref{Lemma0}:}]
	Because of the orthonormal eigenvalue decomposition, i.e.~$\VPk^\T(\widehat{\u})\VPl(\widehat{\u})=\delta_{k,\ell}$, the equality
	$$
	0
	= 
	\frac{1}{2}\,  \partial_{\widehat{\u}_j} \Big[ \VPk^\T(\widehat{\u}) \, \VPk(\widehat{\u}) \Big]
	=
	\VPk^\T(\widehat{\u}) \, \partial_{\widehat{\u}_j}  \VPk(\widehat{\u}) 
	$$
	holds, which implies
	\begin{equation}
		\VPk^\T(\widehat{\u}) \P(\widehat{\u})  \partial_{\widehat{\u}_j} \VPk(\widehat{\u})
		=
		\VPk^\T(\widehat{\u}) \DPk(\widehat{\u})  \partial_{\widehat{\u}_j} \VPk(\widehat{\u})
		=0.
		\label{Lemma2eq1}
	\end{equation}
	Due to~$\partial_{\widehat{\u}_j} \P(\widehat{\u}) = \M_j$, we have
	\begin{equation}
		\begin{aligned}
			0
			&=
			\VPk^\T(\widehat{\u})\;
			\partial_{\widehat{\u}_j}
			\Big[
			\P(\widehat{\u}) \VPk(\widehat{\u})
			-
			\DPk(\widehat{\u})
			\VPk(\widehat{\u})
			\Big]   \\
			&=
			\VPk^\T(\widehat{\u}) \Big[ \M_j- \partial_{\widehat{\u}_j} \DPk(\widehat{\u})\Big] \VPk(\widehat{\u})
			+
			\VPk^\T(\widehat{\u}) 
			\Big[
			\P(\widehat{\u})  
			-
			\DPk(\widehat{\u})
			\Big] 
			\partial_{\widehat{\u}_j} 
			\VPk(\widehat{\u}).  
		\end{aligned}
		\label{Lemma2eq2}
	\end{equation}
	Equations~\eqref{Lemma2eq1} and~\eqref{Lemma2eq2} yield
	\begin{equation}
		\partial_{\widehat{\u}_j} \DPk(\widehat{\u}) 
		=
		\VPk^\T(\widehat{\u}) \M_j \VPk(\widehat{\u}). 
		\label{Lemma2eq3}
	\end{equation}
	Then, the identity~$\P(\widehat{\u})=\big[ \M_0 \widehat{\u} \big\vert \cdots \big\vert \M_K \widehat{\u} \big]$ implies
	\begin{align}
		\partial_{\widehat{\u}_j}
		\DP(\widehat{\u}) \what
		&=
		\begin{pmatrix}
			v_0^\T(\widehat{\u}) \M_j v_0(\widehat{\u}) \what \\
			\vdots \\
			v_K^\T(\widehat{\u}) \M_j v_K(\widehat{\u}) \what
		\end{pmatrix}
		=
		\VP^\T(\widehat{\u}) \M_j \VP(\widehat{\u}) \what, \nonumber \\
		\D_{\bar{\alpha}}
		\DP(\bar{\alpha})\big\vert_{\bar{\alpha}=\widehat{\u}} \VP^\T(\uhat)\what
		&=
		\Big[
		\VP^\T(\uhat)
		\M_0 
		\what
		\Big\vert \cdots \Big\vert
		\VP^\T(\uhat)
		\M_K 
		\what 
		\Big]
		=
		\VP^\T(\uhat)
		\P(\what). \nonumber
	\end{align}
\end{proof}

\section*{Acknowledgments}
\noindent
Stephan Gerster and Giuseppe Visconti acknowledge the support of MUR (Ministry of University and Research) under the MUR-PRIN PNRR Project 2022 No.~P2022JC95T ``Data-driven discovery and control of multi-scale interacting artificial agent systems'' funded by European Union - Next Generation EU.\\

\noindent
Furthermore, we would like to offer special thanks to  \\ Matteo Semplice and Siegfried M\"uller.

\bibliographystyle{amsplain}
\bibliography{mybib}

\end{document}